\documentclass[12pt]{article}

\usepackage[a4paper]{geometry}

\usepackage[utf8]{inputenc}
\usepackage[english]{babel}

\usepackage{biblatex}
\addbibresource{ref.bib}

\usepackage{amsmath}
\usepackage{amsthm}
\usepackage{amssymb}

\newtheorem{theorem}{Theorem}
\newtheorem{lemma}{Lemma}
\newtheorem{definition}{Definition}
\newtheorem{corollary}{Corollary}
\newtheorem{remark}{Remark}

\title{Stability and Performance Analysis on Self-dual Cones}
\author{Emil Vladu\thanks{The author is with the Department of Automatic Control at Lund University. This work was partially supported by the Wallenberg AI, Autonomous Systems and Software Program (WASP) funded by the Knut and Alice Wallenberg Foundation.}}
\date{}
\begin{document}
\maketitle

\begin{abstract}
In this paper, we consider nonsymmetric solutions to certain Lyapunov and Riccati equations and inequalities with coefficient matrices corresponding to cone-preserving dynamical systems. Most results presented here appear to be novel even in the special case of positive systems. First, we provide a simple eigenvalue criterion for a Sylvester equation to admit a cone-preserving solution. For a single system preserving a self-dual cone, this reduces to stability. Further, we provide a set of conditions equivalent to testing a given H-infinity norm bound, as in the bounded real lemma. These feature the stability of a coefficient matrix similar to the Hamiltonian, a solution to two conic inequalities, and a stabilizing cone-preserving solution to a nonsymmetric Riccati equation. Finally, we show that the H-infinity norm is attained at zero frequency.
\end{abstract}

\section{Introduction} \label{sec:intro}
A monotone linear dynamical system is a system such that an input $u(t)$ and a state initial condition $x(0)$ confined to a cone together imply that the state $x(t)$ and output $z(t)$ are also confined to a cone for all $t \geq 0$. Although scarce and still at its infancy, research on the topic is warranted given the success of the special case in which the cones are taken as the nonnegative orthant. Such systems are known as positive systems and are particularly suited for analysis and synthesis of large-scale systems, e.g., \cite{farina2000positive}\cite{rantzer2018tutorial} and the references therein. One important reason is that $L_1$/$L_\infty$-gain verification and controller synthesis reduces to linear programming \cite{briat2013robust}\cite{rantzer2015scalable}. Similarly, positive diagonal solutions to Lyapunov equations and linear matrix inequalities (LMIs) are necessary and sufficient for achieving stability \cite{berman1994nonnegative} and a given $H_\infty$ norm bound \cite{tanaka2011bounded}, respectively. The latter then paves the way for structured synthesis in which the sparsity pattern on the controller may be specified in exchange for requiring closed-loop positivity \cite{tanaka2011bounded}. An additional reformulation of the Lyapunov theorem and the bounded real lemma given in \cite{ebihara2014lmi} is as follows: \textit{any} nonsymmetric solution with positive definite symmetric part is necessary and sufficient in order to certify stability and a particular $H_\infty$ norm, respectively. By contrast, in standard Lyapunov and $H_\infty$ theory, symmetric solutions to Lyapunov equations \cite{rugh1996linear} and LMIs \cite{boyd1994linear}\cite{gahinet1994lmi} or Riccati equations \cite{doyle1988state} are required to certify stability and performance, respectively.

For general proper cones, the key notions appear to be cone-preservance and cross-positivity, corresponding to nonnegative matrices and Metzler matrices, respectively, see Section \ref{sec:prel}. However, the above results do not generalize in a straightforward way to general cone-preserving/monotone systems. For example, extending the connection between the $L_1$ gain and linear programming in \cite{briat2013robust} requires a shift to cone linear absolute norms in \cite{shen2017input}. Further, \cite[Chapter 4]{tanaka2012symmetric} shows that the $H_\infty$ norm of a system which preserves a proper cone does not in general equal its static gain, as it does for positive systems \cite{rantzer2015scalable}. Instead, it is the spectral radius of a transfer function corresponding to such a system which achieves its maximum value at zero frequency \cite{tanaka2013dc}. Finally, the celebrated diagonal solution to Lyapunov equations and LMIs fails to hold more generally. However, if the solution is seen as the result of the quadratic representation of a Jordan algebra applied to a vector obtained through a conic program, then the symmetric cones, a subset of the self-dual ones, appear to be the natural setting for this property. This is indicated by recent works such as \cite{shen2016some} in the bounded real lemma case and \cite{lu2024kyp} in the KYP lemma case, corresponding to \cite{tanaka2011bounded} and \cite{rantzer2015kalman}, respectively, for positive systems. Note in particular that the $H_\infty$ norm is shown to equal the static gain in this setting \cite{shen2016some}. Another very recent result on symmetric cones is \cite{dalin2024special}, in which a Lie-algebraic approach is taken to construct a quadratic Lyapunov function for stable cone-preserving systems which becomes diagonal w.r.t. the nonnegative orthant.

By contrast, in this paper we explore what can be said when the proper cones are self-dual only. The flavor of our results is quite different from that of the above results, most notably in that we depart from matrix symmetry, let alone diagonality. In particular, we show that for two cross-positive matrices on a proper cone, the associated Sylvester equation admits a (possibly nonsymmetric) cone-preserving solution if and only if the sum of the greatest real parts of their eigenvalues is negative. It follows that when the cone is self-dual, a cone-preserving solution to the Lyapunov equation is necessary and sufficient for stability. Additionally, we leverage a recent result in \cite{vladu2024cone} to provide a set of conditions which are equivalent to a $\gamma$-bound on the $H_\infty$ norm for monotone systems w.r.t. self-dual cones: a) a given Riccati inequality has a solution with positive definite symmetric part, b) a stabilizing cone-preserving solution exists to a nonsymmetric Riccati equation, c) a particular coefficient matrix should be stable and d) a set of conic inequalities are satisfied by elements in the interior of the cone. Here, a) appears to generalize \cite{ebihara2014lmi} above, whereas b), c) and d) are novel to the best of the authors' knowledge, even for positive systems. For comparisons to similar results in the literature, see Remark \ref{rem:cf}. Finally, we show that the symmetry of the state cone in \cite{shen2016some} may in fact be relaxed to self-duality in order for the static gain to determine the $H_\infty$ norm.

The outline of the paper is as follows: Section \ref{sec:prel} reviews the basics of cone theory and cross-positivity in particular, and Section \ref{sec:results} presents the results of the paper. Section \ref{sec:example} illustrates the results with some examples and Section \ref{sec:proofs} provides the proofs. Finally, Section \ref{sec:conclusions} concludes the paper. 

\section{Preliminaries} \label{sec:prel}
In this section, we explain our notation and supply the required background for the contents in the remaining sections.

Let $\mathbb{R}$ denote the set of real numbers. $\mathbb{R}^n$ and $\mathbb{R}^{n \times m}$ denote the set of $n$-dimensional vectors and $n \times m$ matrices, respectively, with entries in $\mathbb{R}$. For $M \in \mathbb{R}^{n \times m}$, $\lVert M \rVert$ denotes the spectral (2-induced) norm of $M$, and $I$ is the identity matrix, with context determining its dimension. For $A \in \mathbb{R}^{n \times n}$, we say that $A$ is Hurwitz if all its eigenvalues have negative real part, and Metzler if all its offdiagonal elements are nonnegative. $\sigma (A)$ is the spectrum of $A$, i.e., the set of all its eigenvalues. If $A$ is symmetric, then $A \succ (\succeq) 0$ means that $A$ is positive (semi)definite.

We recall at this point the following important observation, made for instance in \cite{ebihara2014lmi}.
\begin{lemma} \label{lem:lyap_real}
    Let $A \in \mathbb{R}^{n \times n}$ and suppose that there exists a $P \in \mathbb{R}^{n \times n}$ with $P + P^T \succ 0$ such that $$A^T P^T + PA \prec 0.$$ If $\lambda$ is a real eigenvalue of $A$, then $\lambda < 0$.
\end{lemma}

A set $K \subseteq \mathbb{R}^n$ is called a cone if $x \in \mathcal{K}$ and $\alpha \geq 0$ imply $\alpha x \in K$. If in addition $K$ is convex, closed, pointed ($\mathcal{K} \cap \mathcal{-K} = \{0\}$) and has non-empty interior, then it is called a proper cone. A proper cone induces a partial order $\succeq_K$ on $\mathbb{R}^n$, i.e., $x \succeq_K y$ if and only if $x - y \in K$; strict inequality $x \succ_K y$ means that $x - y$ lies in the interior of $K$. The dual cone associated with a proper cone is $$K_* = \{y \mid y^T x \geq 0 \; \mathrm{for \; all} \; x \in K \}.$$ If $K = K_*$, then we say that $K$ is self-dual.

A matrix $A \in \mathbb{R}^{n \times n}$ is said to be cross-positive on $K$ if $x \in K$, $y \in K_*$ and $y^T x = 0$ imply $y^T A x \geq 0$. In particular, cross-positive matrices on the nonnegative orthant correspond to Metzler matrices. Another important notion associated with proper cones is that of $K$-nonnegativity: $A \in \mathbb{R}^{n \times n}$ is said to be $K$-nonnegative or preserve $K$ if $AK \subseteq K$. Since the set of $K$-nonnegative matrices is itself a proper cone in $\mathbb{R}^{n \times n}$ \cite[Chap. 1.1]{berman1994nonnegative}, it induces a partial order which we denote also by $\succeq_K$. Thus, $X \succeq_K Y$ for matrices $X, Y \in \mathbb{R}^{n \times n}$, as opposed to vectors, means that $X - Y$ is $K$-nonnegative. Similarly, $X \succ_K Y$ means that $X - Y$ maps all nonzero elements in $K$ into its interior, cf. \cite{schneider1970cross}. In the case that $K$ is the nonnegative orthant, $X \succeq_K 0$ and $X \succ_K 0$ mean that $X$ is entrywise nonnegative and positive, respectively. 

Cross-positive matrices satisfy the following property.
\begin{lemma} \label{lem:cross_krein} \cite[Theorem 5]{schneider1970cross}
    Let $K \subseteq \mathbb{R}^n$ be a proper cone and $A \in \mathbb{R}^{n \times n}$. Suppose now that $A$ is cross-positive on $K$. Then $\mu = \mathrm{max} \{\mathrm{Re}(\lambda ) \mid \lambda \in \sigma(A) \}$ is an eigenvalue of $A$. Further, $K$ contains an eigenvector corresponding to $\lambda$.
\end{lemma}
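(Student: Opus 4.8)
The plan is to deduce this from the Perron--Frobenius (Krein--Rutman) theorem for $K$-nonnegative matrices by exponentiating $A$; the bridge is the classical fact that cross-positivity of $A$ on $K$ is exactly the infinitesimal (sub-tangentiality) condition for the linear flow $t \mapsto e^{tA}$ to leave $K$ invariant.

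\textbf{Step 1: cross-positive implies $e^{tA} \succeq_K 0$ for all $t \ge 0$.} Fix $x \in K$ and let $z(t) = e^{tA}x$ solve $\dot z = Az$, $z(0) = x$. I would invoke the invariance theorem for closed (convex) sets under Lipschitz vector fields (Nagumo / Bony--Brezis): the closed convex cone $K$ is forward invariant for $\dot z = Az$ if and only if $Az \in T_K(z)$ for every $z \in K$, where $T_K(z)$ is the tangent cone. For a convex cone $K$ and $z \in K$ one computes the normal cone to be $N_K(z) = \{ y : y^T w \le y^T z \text{ for all } w \in K \} = \{ y \in -K_* : y^T z = 0 \}$, so $Az \in T_K(z) = N_K(z)^{\circ}$ is equivalent to $y^T A z \ge 0$ for every $y \in K_*$ with $y^T z = 0$, which, since $z \in K$, is precisely cross-positivity. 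Hence $z(t) = e^{tA}x \in K$ for all $t \ge 0$, i.e.\ $e^{tA}$ preserves $K$. (Alternatively, one may cite the Schneider--Vidyasagar/Elsner characterization ``$A$ cross-positive on $K$ $\iff$ $e^{tA}$ is $K$-nonnegative for all $t \ge 0$'' directly and skip the derivation.)

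\textbf{Step 2: Perron--Frobenius plus a limiting argument.} Fix $t > 0$ and put $B_t := e^{tA} \succeq_K 0$. By the Perron--Frobenius theorem for proper cones, the spectral radius of $B_t$ is an eigenvalue of $B_t$ with an eigenvector $v_t \in K$, which we normalize to $\lVert v_t \rVert = 1$. By the spectral mapping theorem $\sigma(B_t) = \{ e^{t\lambda} : \lambda \in \sigma(A) \}$, so this spectral radius equals $\max_{\lambda \in \sigma(A)} e^{t \mathrm{Re}(\lambda)} = e^{t\mu}$ (using $t>0$); thus $e^{tA} v_t = e^{t\mu} v_t$. Now send $t \to 0^+$ along a sequence $t_k$ with $v_{t_k} \to v$ — possible because $\{ v \in K : \lVert v \rVert = 1 \}$ is compact — so that $v \in K$, $\lVert v \rVert = 1$. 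Rearranging the eigen-relation gives $\tfrac{1}{t_k}(e^{t_k A} - e^{t_k \mu} I)\, v_{t_k} = 0$ for all $k$, and since $\tfrac{1}{t}(e^{tA} - e^{t\mu} I) \to A - \mu I$ in norm as $t \to 0$ while $v_{t_k} \to v$, passing to the limit yields $(A - \mu I) v = 0$. Hence $\mu \in \sigma(A)$ and $v \in K$ is a corresponding eigenvector, as claimed.

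\textbf{Main obstacle and remarks.} The only substantive point is Step 1: extracting forward invariance of $K$ from the purely infinitesimal cross-positivity condition. This rests on the Nagumo-type invariance theorem together with the identification of $N_K(z)$ with $\{ y \in -K_* : y^T z = 0 \}$ at boundary points (and $N_K(z) = \{0\}$ in the interior, making the condition vacuous there); once this is in place, everything else — Perron--Frobenius for proper cones, spectral mapping, and the elementary compactness/limit argument of Step 2 — is routine. One variant of Step 2 that avoids the limit is to choose a ``generic'' $t > 0$ for which $e^{t\lambda} = e^{t\mu}$ forces $\lambda = \mu$ among the finitely many eigenvalues with real part $\mu$, and then read $\mu \in \sigma(A)$ and the eigenvector off $B_t$ directly via the spectral mapping theorem; the limiting argument is cleaner and produces both conclusions at once.
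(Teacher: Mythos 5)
The paper offers no proof of this lemma; it is imported verbatim from \cite{schneider1970cross}, so there is nothing internal to compare against. Your argument is a correct, self-contained proof, and it is the natural one given the toolkit the paper itself assembles: your Step 1 is precisely Lemma \ref{lem:cross_eAt} (which the paper also cites rather than proves), and your Nagumo-type derivation of it, via the identification of the normal cone $N_K(z)$ with $\{y \in -K_* : y^T z = 0\}$, is sound. Step 2 is likewise correct: for $t>0$ the spectral mapping theorem gives $\rho(e^{tA}) = e^{t\mu}$, Perron--Frobenius for proper cones (Krein--Rutman/Birkhoff--Vandergraft) supplies a unit eigenvector $v_t \in K$, compactness of $K \cap \{\lVert v \rVert = 1\}$ lets you extract a convergent subsequence as $t_k \to 0^+$, and the norm convergence $\tfrac{1}{t}(e^{tA} - e^{t\mu}I) \to A - \mu I$ passes the eigenrelation to the limit, yielding $(A-\mu I)v = 0$ with $v \in K$, $v \neq 0$. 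The only presentational caveat is that if you intend to cite the equivalence of cross-positivity with $e^{tA} \succeq_K 0$ (as the parenthetical in Step 1 suggests), you should be careful about circularity with the source being proved; your explicit Nagumo derivation avoids this entirely, so the proof stands on its own. (The lemma's final sentence should read ``corresponding to $\mu$'' rather than ``$\lambda$''; your proof establishes the intended statement.)
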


The cross-positivity of a block matrix is connected to that of its constituents in the following way.
\begin{lemma} \label{lem:kxk} \cite[Lemma 5]{vladu2024cone}
    Let the proper cone $K \subseteq \mathbb{R}^n$ and the matrices $A, B, C, D$ $\in \mathbb{R}^{n \times n}$ be given. Then $$L = \begin{pmatrix} A && B \\ C && D \end{pmatrix}$$ is cross-positive on $K \times K$ if and only if $A$ and $D$ are cross-positive on $K$ and $B, C \succeq_K 0$.
\end{lemma}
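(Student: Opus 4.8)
The plan is to unwind both directions of the equivalence directly from the definitions, using that the dual cone of $K \times K$ is $K_* \times K_*$. For $(x_1, x_2) \in K \times K$ and $(y_1, y_2) \in K_* \times K_*$, the orthogonality condition in the definition of cross-positivity reads $y_1^T x_1 + y_2^T x_2 = 0$; since each summand is nonnegative, this is equivalent to having $y_1^T x_1 = 0$ and $y_2^T x_2 = 0$ separately. Expanding the associated bilinear form gives
\[
\begin{pmatrix} y_1 \\ y_2 \end{pmatrix}^{\!T} L \begin{pmatrix} x_1 \\ x_2 \end{pmatrix} = y_1^T A x_1 + y_2^T D x_2 + y_1^T B x_2 + y_2^T C x_1,
\]
and this decomposition is the whole engine of the argument.

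For the ``if'' direction, assume $A, D$ are cross-positive on $K$ and $B, C \succeq_K 0$. Given admissible $(x_1, x_2)$ and $(y_1, y_2)$ as above, the first two terms are nonnegative because $y_i^T x_i = 0$ and $A, D$ are cross-positive, while the last two are nonnegative because $B x_2, C x_1 \in K$ (as $B$ and $C$ are $K$-nonnegative) and $y_1, y_2 \in K_*$. Hence the sum is nonnegative, i.e., $L$ is cross-positive on $K \times K$.

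For the ``only if'' direction, I would specialize the cross-positivity of $L$ to vectors supported on a single block. Taking $x_2 = 0$, $y_2 = 0$ with $x_1 \in K$, $y_1 \in K_*$, $y_1^T x_1 = 0$, the identity collapses to $y_1^T A x_1 \geq 0$, so $A$ is cross-positive on $K$; the choice $x_1 = 0$, $y_1 = 0$ gives the same for $D$. To obtain $B \succeq_K 0$, take $x_1 = 0$ and $y_2 = 0$ with arbitrary $x_2 \in K$ and $y_1 \in K_*$ (here the orthogonality condition is automatically $0 = 0$), and the identity collapses to $y_1^T B x_2 \geq 0$; since this holds for every $y_1 \in K_*$ and $K = K_{**}$ by properness, $B x_2 \in K$ for every $x_2 \in K$, i.e., $B$ is $K$-nonnegative. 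The symmetric choice $x_2 = 0$, $y_1 = 0$ yields $C \succeq_K 0$.

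I do not expect a genuine obstacle here. The only point requiring a little care is the last step, where one must invoke the biduality $K = K_{**}$ (valid since $K$ is a proper cone) to turn ``$y^T B x \geq 0$ for all $y \in K_*$'' into ``$Bx \in K$'', and one should observe that $0$ lies in both $K$ and $K_*$ so that the block-supported test vectors are admissible in the definition of cross-positivity.
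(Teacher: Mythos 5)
Your proof is correct and complete. Note that the paper does not prove this lemma at all --- it is quoted from \cite[Lemma 5]{vladu2024cone} --- so there is no in-paper argument to compare against; your direct unwinding of the definitions (splitting the orthogonality condition $y_1^T x_1 + y_2^T x_2 = 0$ into its two nonnegative summands, and testing block-supported vectors for the converse, with the biduality $K_{**}=K$ invoked where needed) is exactly the standard route one would expect the cited reference to take.
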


An important connection between dynamical systems and the notion of cross-positivity is given by the following lemma.
\begin{lemma} \label{lem:cross_eAt} \cite[Theorem 3]{schneider1970cross} Let $K \subseteq \mathbb{R}^n$ be a proper cone and $A \in \mathbb{R}^{n \times n}$. Then $A$ is cross-positive on $K$ if and only if $e^{At}$ is $K$-nonnegative for all $t \geq 0$.
\end{lemma}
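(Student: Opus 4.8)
The plan is to dispatch the ``only if'' direction by a one-line derivative argument and to prove ``if'' by routing through the resolvent of $A$. For ``only if'', suppose $e^{At} \succeq_K 0$ for all $t \geq 0$ and let $x \in K$, $y \in K_*$ with $y^T x = 0$. For every $t \geq 0$ we have $e^{At} x \in K$, so $\varphi(t) := y^T e^{At} x \geq 0$, while $\varphi(0) = y^T x = 0$. A differentiable function that is nonnegative on $[0,\infty)$ and vanishes at $0$ has nonnegative right derivative there, hence $0 \leq \varphi'(0) = y^T A x$; so $A$ is cross-positive on $K$.

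For ``if'', I would first show that cross-positivity of $A$ implies $(\beta I - A)^{-1} \succeq_K 0$ for every $\beta > \lVert A \rVert$. Fix such a $\beta$ (so $\beta I - A$ is invertible), take $x \in K$, and set $w := (\beta I - A)^{-1} x$; the goal is $w \in K$. Using the Moreau decomposition of $w$ relative to the closed convex cone $K$, write $w = \bar w - z$ with $\bar w \in K$, $z \in K_*$ and $\bar w^T z = 0$; then $w \in K$ is equivalent to $z = 0$. From $(\beta I - A) w = x \in K$ and $z \in K_*$ we get $z^T(\beta I - A) w \geq 0$, while expanding and using $\bar w^T z = 0$ gives
\[
z^T(\beta I - A) w = -\,z^T A \bar w \;-\; \beta \lVert z \rVert^2 \;+\; z^T A z .
\]
Cross-positivity applied to $\bar w \in K$, $z \in K_*$ with $z^T \bar w = 0$ gives $z^T A \bar w \geq 0$, and $z^T A z \leq \lVert A \rVert \lVert z \rVert^2$, so $z^T(\beta I - A) w \leq (\lVert A \rVert - \beta)\lVert z \rVert^2 \leq 0$, with equality only if $z = 0$; combined with $z^T(\beta I - A) w \geq 0$ this forces $z = 0$, i.e.\ $w \in K$. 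To pass from the resolvent to the exponential I would use the standard limit $e^{At} = \lim_{n \to \infty}\bigl(I - \tfrac{t}{n}A\bigr)^{-n}$, valid for $t \geq 0$: once $n$ is large enough that $n/t > \lVert A \rVert$, the above gives $\bigl(I - \tfrac{t}{n}A\bigr)^{-1} = \tfrac{n}{t}\bigl(\tfrac{n}{t}I - A\bigr)^{-1} \succeq_K 0$, hence $\bigl(I - \tfrac{t}{n}A\bigr)^{-n} \succeq_K 0$ since products of $K$-nonnegative matrices are $K$-nonnegative, and the limit $e^{At}$ is $K$-nonnegative because the $K$-nonnegative matrices form a closed set (a proper cone, as recalled above). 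As $t \geq 0$ was arbitrary this completes ``if''.

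The main obstacle is the ``if'' direction: cross-positivity constrains $y^T A x$ only on the boundary of $K$ (when $y^T x = 0$), and this infinitesimal boundary information has to be propagated to a global statement about the flow $e^{At}$. The tempting shortcut --- track a single supporting functional $\varphi(t) = y^T e^{At} x$ along a trajectory first touching $\partial K$ --- stalls because $\varphi$ may have a zero of high order, so $\varphi'(0) \geq 0$ is by itself inconclusive; the resolvent detour above sidesteps this. An alternative would be to note that cross-positivity of $A$ is exactly the condition $Ax \in T_K(x)$ for all $x \in K$, with $T_K(x)$ the tangent cone to $K$ at $x$, and then invoke Nagumo's invariance theorem; and since the statement is due to \cite{schneider1970cross}, one may of course also simply cite it.
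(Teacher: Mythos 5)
Your proof is correct, and it is worth noting at the outset that the paper itself offers no argument for this statement: Lemma~\ref{lem:cross_eAt} is imported verbatim as \cite[Theorem 3]{schneider1970cross}, so there is no in-paper proof to compare against. Your ``only if'' direction is the standard one-line derivative computation and is fine ($\varphi(t)=y^Te^{At}x\ge 0$ with $\varphi(0)=0$ forces $\varphi'(0)=y^TAx\ge 0$). The ``if'' direction via the resolvent is a clean and fully rigorous route: the Moreau decomposition $w=\bar w-z$ with $\bar w\in K$, $z\in K_*$, $\bar w^Tz=0$ turns cross-positivity into exactly the inequality $z^TA\bar w\ge 0$ needed to show $z^T(\beta I-A)w\le(\lVert A\rVert-\beta)\lVert z\rVert^2$, which together with $z^T(\beta I-A)w=z^Tx\ge 0$ forces $z=0$; the passage $(\,\beta I-A)^{-1}\succeq_K 0 \Rightarrow e^{At}\succeq_K 0$ via $e^{At}=\lim_n(I-\tfrac{t}{n}A)^{-n}$ correctly uses that $K$-nonnegative matrices are closed under products, positive scaling, and limits (the latter because they form a proper, hence closed, cone, as recalled in Section~\ref{sec:prel}). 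You also correctly diagnose why the naive converse attempt (tracking $\varphi'$ along a trajectory touching $\partial K$) stalls. The original Schneider--Vidyasagar proof proceeds differently (through a chain of equivalent subtangentiality-type conditions), but since the paper only cites it, your resolvent argument is a self-contained and entirely adequate substitute.
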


It follows that for a system $\dot{x} = Ax$ with $A$ cross-positive on $K$, the resulting trajectory $x(t)$ given $x(0) \in K$ will remain inside $K$ for all $t \geq 0$.

We have the following stability test for cross-positive matrices, see e.g., \cite{shen2016some}.
\begin{lemma} \label{lem:cross_stable} \cite[Facts 7.1, 7.3, 7.5]{schneider2006matrices} Suppose $A  \in \mathbb{R}^{n \times n}$ is cross-positive on a proper cone $K \subseteq \mathbb{R}^n$. Then the following are equivalent:
\begin{enumerate}
    \item[(i)] $A$ is stable.
    \item[(ii)] There exists $x \succ_K 0$ such that $Ax \prec_K 0$.
    \item[(iii)] $A$ is invertible and $-A^{-1}$ is $K$-nonnegative.
\end{enumerate}
\end{lemma}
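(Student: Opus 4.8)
The plan is to establish the cycle of implications (i) $\Rightarrow$ (iii) $\Rightarrow$ (ii) $\Rightarrow$ (i), relying on Lemma \ref{lem:cross_eAt}, Lemma \ref{lem:cross_krein}, and two standard facts about a proper cone $K$: first, if $y \in K_* \setminus \{0\}$ and $z \succ_K 0$ then $y^T z > 0$; second, every point on the boundary of $K$ admits a nonzero supporting functional in $K_*$.

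For (i) $\Rightarrow$ (iii): stability makes $A$ invertible, and integrating $\tfrac{d}{dt} e^{At} = A e^{At}$ from $0$ to $T$ and letting $T \to \infty$ (the integral converges since $A$ is Hurwitz) yields $-A^{-1} = \int_0^\infty e^{At}\,dt$. By Lemma \ref{lem:cross_eAt} every $e^{At}$ is $K$-nonnegative, and since the $K$-nonnegative matrices form a closed convex cone, this property survives the (improper) integral; hence $-A^{-1}$ is $K$-nonnegative. For (iii) $\Rightarrow$ (ii): fix any $v \succ_K 0$ and put $x := -A^{-1} v$. Then $x \in K$ because $-A^{-1}$ preserves $K$, and $Ax = -v \prec_K 0$, so in particular $x \neq 0$. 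If $x$ were on the boundary of $K$, pick $y \in K_* \setminus \{0\}$ with $y^T x = 0$; cross-positivity would give $y^T A x \geq 0$, contradicting $y^T A x = -y^T v < 0$. Hence $x \succ_K 0$.

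For (ii) $\Rightarrow$ (i): observe that $A$ being cross-positive on $K$ is the same as $A^T$ being cross-positive on $K_*$, the defining condition being symmetric under $(K, A) \leftrightarrow (K_*, A^T)$ together with $K_{**} = K$. Let $\mu = \max\{\mathrm{Re}(\lambda) : \lambda \in \sigma(A)\}$, which equals the corresponding quantity for $A^T$. Lemma \ref{lem:cross_krein} applied to $A^T$ on $K_*$ produces $y \in K_* \setminus \{0\}$ with $A^T y = \mu y$. Pairing with the $x$ from (ii) gives $\mu\, y^T x = y^T (A x) < 0$, since $Ax \prec_K 0$ and $y \neq 0$, while $y^T x > 0$ since $x \succ_K 0$; therefore $\mu < 0$ and $A$ is stable.

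I expect the decisive step to be (ii) $\Rightarrow$ (i): this is where cross-positivity is genuinely needed — the implication fails for arbitrary $A$ — and where the Perron–Frobenius/Krein–Rutman content of Lemma \ref{lem:cross_krein} enters, via the small but essential move of passing to the dual cone so as to obtain a dominant eigenvector of $A^T$ that pairs strictly positively with $x$. Minor care is also required in (iii) $\Rightarrow$ (ii) with the supporting-hyperplane argument and with the strictness of the pairings $y^T v > 0$ and $y^T x > 0$, both of which rely on $v$ and $x$ lying in the interior of $K$.
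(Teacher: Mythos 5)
The paper does not prove this lemma; it is imported verbatim from the literature (the citation to Facts 7.1, 7.3, 7.5 of the Schneider reference), so there is no in-paper argument to compare against. Your proof is correct and self-contained, and its ingredients are exactly the ones the author relies on elsewhere: the representation $-A^{-1}=\int_0^\infty e^{At}\,dt$ together with closedness of the proper cone of $K$-nonnegative matrices is the same device used in the $\Leftarrow$ direction of Theorem~\ref{thrm:sylvester}; the observation that $A$ is cross-positive on $K$ iff $A^T$ is cross-positive on $K_*$ is the same duality step as in the proof of Corollary~\ref{cor:lyapunov} (there specialized to $K=K_*$); and pairing a Perron eigenvector from Lemma~\ref{lem:cross_krein} against a strictly feasible point is the mechanism of the $\Rightarrow$ direction of Theorem~\ref{thrm:sylvester}. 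Your assessment of where the work lies is also right: (ii)~$\Rightarrow$~(i) is the step that genuinely uses cross-positivity via the Krein--Rutman-type eigenvector in the dual cone, while (iii)~$\Rightarrow$~(ii) needs the supporting-functional argument to upgrade $x\in K$ to $x\succ_K 0$. No gaps.
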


A recent result characterizes the existence of a stabilizing $K$-nonnegative solution to a nonsymmetric Algebraic Riccati Equation.
\begin{lemma} \label{lem:emil} \cite[Theorem 1]{vladu2024cone}     
Let the proper cone $K \subseteq \mathbb{R}^n$ and the matrices $A, B, C, D \in \mathbb{R}^{n \times n}$ be given. Suppose now that $$L = \begin{pmatrix} A && B \\ C && D \end{pmatrix}$$ is cross-positive on $K \times K$. Then $L$ is stable if and only if
    \begin{equation*}
    XBX + DX + XA + C = 0
    \end{equation*} has a solution $X_* \succeq_K 0$ such that $A + BX_*$ and $D + X_* B$ are Hurwitz and cross-positive on K.
\end{lemma}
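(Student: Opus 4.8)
I would argue directly through the Perron--Frobenius structure rather than through a change of variables. Suppose $X_* \succeq_K 0$ solves the equation with $A + BX_*$ and $D + X_*B$ Hurwitz and cross-positive on $K$. Since $L$ is cross-positive on $K \times K$, Lemma \ref{lem:cross_krein} furnishes $\mu := \mathrm{max}\{\mathrm{Re}(\lambda) \mid \lambda \in \sigma(L)\} \in \sigma(L)$ together with an eigenvector $\begin{pmatrix} p \\ q \end{pmatrix} \in K \times K$, and it suffices to show $\mu < 0$. Assume $\mu \geq 0$. Right-multiplying the Riccati equation by $p$ and substituting $Ap = \mu p - Bq$ and $Cp = \mu q - Dq$, then regrouping, yields $(D + X_*B)(X_*p - q) = -\mu(X_*p + q) \preceq_K 0$; as $D + X_*B$ is Hurwitz and cross-positive, $-(D + X_*B)^{-1} \succeq_K 0$ by Lemma \ref{lem:cross_stable}, hence $X_*p - q \succeq_K 0$. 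Then $\mu p = Ap + Bq \preceq_K Ap + BX_*p = (A + BX_*)p$, so $(A+BX_*)p \succeq_K \mu p \succeq_K 0$; applying $-(A + BX_*)^{-1} \succeq_K 0$ gives $-p \in K$, so $p = 0$ by pointedness, and then $q \preceq_K X_*p = 0$ forces $q = 0$ as well, contradicting $\begin{pmatrix} p \\ q \end{pmatrix} \neq 0$. Hence $\mu < 0$, i.e., $L$ is stable.

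\textbf{Necessity.} Now suppose $L$ is stable. By Lemma \ref{lem:kxk}, $A$ and $D$ are cross-positive on $K$ and $B, C \succeq_K 0$; by Lemma \ref{lem:cross_stable} there is $\begin{pmatrix} \xi \\ \eta \end{pmatrix}$ with $\xi, \eta \succ_K 0$ and $A\xi + B\eta \prec_K 0$, $C\xi + D\eta \prec_K 0$. Since $B\eta, C\xi \succeq_K 0$, this forces $A\xi \prec_K 0$ and $D\eta \prec_K 0$, so a further application of Lemma \ref{lem:cross_stable} shows $A$ and $D$ are individually stable. I would then run the monotone iteration $X_0 = 0$, with $X_{k+1}$ the unique solution of the Sylvester equation $DX_{k+1} + X_{k+1}A + X_kBX_k + C = 0$ (uniquely solvable since $\sigma(D)$ and $\sigma(-A)$ are disjoint), which admits the representation $X_{k+1} = \int_0^\infty e^{Dt}(X_kBX_k + C)\,e^{At}\,dt$. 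Because $e^{Dt}, e^{At} \succeq_K 0$ for $t \geq 0$ (Lemma \ref{lem:cross_eAt}) and $B, C \succeq_K 0$, induction gives $X_k \succeq_K 0$ for every $k$; and since $Y \mapsto YBY$ is monotone on $K$-nonnegative matrices, the same representation gives $0 = X_0 \preceq_K X_1 \preceq_K X_2 \preceq_K \cdots$.

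The decisive step is to bound this chain. The plan is to use the stability of $L$ --- concretely the interior vector $\begin{pmatrix} \xi \\ \eta \end{pmatrix}$ --- to produce a supersolution $\bar X \succeq_K 0$ with $\bar XB\bar X + D\bar X + \bar XA + C \preceq_K 0$, after which the same integral argument yields $X_k \preceq_K \bar X$ for all $k$ by induction. Since the $K$-nonnegative matrices form a proper cone in $\mathbb{R}^{n \times n}$ and, in finite dimension, monotone order-bounded sequences in a proper cone converge, $X_k \to X_* \succeq_K 0$, and passing to the limit yields $X_*BX_* + DX_* + X_*A + C = 0$. The matrices $A + BX_*$ and $D + X_*B$ are cross-positive on $K$ (a cross-positive matrix plus a $K$-nonnegative one), and they are Hurwitz because $X_*$, obtained from $X_0 = 0$, is the \emph{minimal} $K$-nonnegative solution: minimality together with stability of $L$ pins their spectra to the open left half-plane. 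I expect the boundedness step and the stabilizing property of $X_*$ to be the two genuine obstacles; both are classical in the positive case (the M-matrix Riccati theory, with Lemmas \ref{lem:cross_krein} and \ref{lem:cross_stable} replacing the Perron--Frobenius theorem and inverse-positivity of nonsingular M-matrices), and for general self-dual cones one may either adapt those arguments to the cone order or first treat the $(K \times K)$-irreducible case and remove irreducibility by a vanishing perturbation of $L$ within the stable cross-positive matrices. Everything else is routine manipulation with the integral representation and the order induced by $K$.
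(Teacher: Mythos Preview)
The paper does not prove this lemma; it is quoted verbatim from \cite{vladu2024cone} in the preliminaries and used as a black box in the proof of Theorem~\ref{thrm:kyp_ext} (the implication $(ii)\Rightarrow(iii)$). So there is no in-paper argument to compare your proposal against.

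On the merits of your write-up: the sufficiency half is clean and correct. Evaluating the Riccati identity on the Perron eigenvector and collapsing to $(D+X_*B)(X_*p-q)=-\mu(X_*p+q)$, then using inverse-positivity of $-(D+X_*B)^{-1}$ and $-(A+BX_*)^{-1}$ from Lemma~\ref{lem:cross_stable}, is a tidy way to force $p=q=0$ and obtain the contradiction.

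The necessity half is where the proof is only a plan, and you flag this yourself. Two concrete points you would need to close: (a) you assert the existence of a $K$-nonnegative supersolution $\bar X$ ``from $(\xi,\eta)$'' but do not construct one---a single interior vector pair with $A\xi+B\eta\prec_K 0$ and $C\xi+D\eta\prec_K 0$ does not by itself hand you a matrix $\bar X$ with $\bar XB\bar X+D\bar X+\bar XA+C\preceq_K 0$, and this is the step that actually uses the full strength of the stability of $L$; (b) the claim that the minimal solution is stabilizing (``minimality together with stability of $L$ pins their spectra\ldots'') needs an argument---in the positive/M-matrix setting this goes through a careful spectral splitting, and the cone-general version is not routine. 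Your suggested fallback of treating the $K\times K$-irreducible case first and then perturbing is reasonable, but as written these two points are genuine gaps rather than routine manipulations.
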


We close this section by considering linear time-invariant (LTI) systems with state matrix $A \in \mathbb{R}^{n \times n}$, input matrix $B \in \mathbb{R}^{n \times m}$ and output matrix $C \in \mathbb{R}^{p \times n}$. We consider only the case with no direct term. Recall now that for such stable systems with transfer function $G(s) = C ( sI - A) ^{-1} B$, the $H_\infty$ norm is defined as $\lVert G \rVert _\infty = \sup_{\omega} \lVert G(i \omega ) \rVert$. A variant of the well-known bounded real lemma is as follows.
\begin{lemma} \label{lem:kyp} \cite[Corollary 12.3]{zhou1998essentials}
    Let $\gamma > 0$ and suppose that $A$ is Hurwitz. Define now $$H = \begin{pmatrix} A && \frac{1}{\gamma ^{2}} BB^T \\ -C^T C && -A^T \end{pmatrix}.$$ The following conditions are equivalent:
    \begin{enumerate}
        \item[(i)] $\lVert G \rVert _\infty < \gamma$
        \item[(ii)] $H$ has no eigenvalues on the imaginary axis.
        \item[(iii)] There exists a $P \succeq 0$ such that \begin{equation} \label{eq:ric_eq_normal} \frac{1}{\gamma^{2}} PBB^T P + A^T P + PA + C^T C = 0 \end{equation} and $A + \frac{1}{\gamma ^2}BB^T P$ has no imaginary axis eigenvalues.
        \item[(iv)] There exists a $P \succ 0$ such that \begin{equation} \label{eq:ric_normal} \frac{1}{\gamma^{2}} PBB^T P + A^T P + PA + C^T C \prec 0. \end{equation}
    \end{enumerate}
\end{lemma}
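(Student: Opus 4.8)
The statement is the classical bounded real lemma; to prove it from scratch, the plan is to establish the cycle of implications $(iv)\Rightarrow(i)\Rightarrow(ii)\Rightarrow(iii)\Rightarrow(iv)$, exploiting the Hamiltonian structure of $H$ together with a short frequency-domain computation. For $(iv)\Rightarrow(i)$ I would use a dissipation inequality: given a symmetric $P\succ 0$ satisfying \eqref{eq:ric_normal}, fix $\omega\in\mathbb{R}$ and $u\neq 0$ with $Bu\neq 0$, put $v=(i\omega I-A)^{-1}Bu$ (well defined since $A$ is Hurwitz), and pre- and post-multiply \eqref{eq:ric_normal} by $v^{*}$ and $v$. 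Using $Av=i\omega v-Bu$, the fact that $v^{*}Pv$ is real, and completing the square, one arrives at $\gamma^{-2}\lVert B^{T}Pv-\gamma^{2}u\rVert^{2}-\gamma^{2}\lVert u\rVert^{2}+\lVert Cv\rVert^{2}<0$, hence $\lVert G(i\omega)u\rVert<\gamma\lVert u\rVert$. Since $A$ is Hurwitz and there is no direct term, $G(i\omega)\to 0$ as $\lvert\omega\rvert\to\infty$, so this strict pointwise bound upgrades to $\lVert G\rVert_{\infty}<\gamma$.

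For $(i)\Rightarrow(ii)$ I would argue by contraposition. If $i\omega_{0}\in\sigma(H)$ with eigenvector $(x;y)\neq 0$, then writing out the two block rows of $H(x;y)=i\omega_{0}(x;y)$ and using that $A$ is Hurwitz (so neither $i\omega_{0}$ nor $-i\omega_{0}$ lies in $\sigma(A)$), one first checks that $x\neq 0$ and $\eta:=Cx\neq 0$; then, substituting $y=-(i\omega_{0}I+A^{T})^{-1}C^{T}\eta$ and $x=\gamma^{-2}(i\omega_{0}I-A)^{-1}B\,B^{T}y$ and recognizing $B^{T}y=G(i\omega_{0})^{*}\eta$, one obtains $\bigl(\gamma^{2}I-G(i\omega_{0})G(i\omega_{0})^{*}\bigr)\eta=0$. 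Thus $\gamma^{2}$ is an eigenvalue of $G(i\omega_{0})G(i\omega_{0})^{*}$, i.e. a squared singular value of $G(i\omega_{0})$, so $\lVert G(i\omega_{0})\rVert\geq\gamma$ and $(i)$ fails.

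The core step is $(ii)\Rightarrow(iii)$. One checks that $H$ is Hamiltonian, i.e. $JH$ is symmetric for $J=\bigl(\begin{smallmatrix}0&I\\-I&0\end{smallmatrix}\bigr)$, so $\sigma(H)$ is symmetric about the imaginary axis; with $(ii)$ this forces exactly $n$ eigenvalues of $H$ into the open left half-plane. Let $\mathrm{Im}\,(X_{1};X_{2})$ be the associated $n$-dimensional $H$-invariant subspace, so that $H(X_{1};X_{2})=(X_{1};X_{2})\Lambda$ with $\Lambda$ Hurwitz. This subspace is Lagrangian, i.e. $X_{1}^{*}X_{2}$ is Hermitian; using this one shows $\ker X_{1}$ is $\Lambda$-invariant and that a nonzero vector in it would produce an eigenvalue of $A^{T}$ in the open right half-plane, which is impossible, so $X_{1}$ is invertible. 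Then $P:=X_{2}X_{1}^{-1}$ is symmetric; post-multiplying the two block rows of $H(X_{1};X_{2})=(X_{1};X_{2})\Lambda$ by $X_{1}^{-1}$ shows $P$ solves \eqref{eq:ric_eq_normal}, while $A+\frac{1}{\gamma^{2}}BB^{T}P=X_{1}\Lambda X_{1}^{-1}$ is similar to $\Lambda$, hence Hurwitz and in particular free of imaginary-axis eigenvalues. Finally, $P\succeq 0$ I would obtain by identifying $P$ with the limit as $T\to\infty$ of the (positive semidefinite) solutions on $[0,T]$ of the associated Riccati differential equation with zero terminal condition; that this limit exists is exactly what $(ii)$ buys.

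For the last implication $(iii)\Rightarrow(iv)$, first observe that $(iii)\Rightarrow(i)$: the congruence of the first paragraph, applied now to the \emph{equality} \eqref{eq:ric_eq_normal}, yields $\lVert G(i\omega)\rVert\leq\gamma$ for every $\omega$, and if equality held at some $\omega_{0}$ the corresponding maximizing $u_{0}$ would force $B^{T}Pv_{0}=\gamma^{2}u_{0}$, making $v_{0}=(i\omega_{0}I-A)^{-1}Bu_{0}\neq 0$ an eigenvector of $A+\frac{1}{\gamma^{2}}BB^{T}P$ at $i\omega_{0}$ and contradicting $(iii)$; combined with $G(i\omega)\to 0$ this gives $\lVert G\rVert_{\infty}<\gamma$. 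Re-running $(i)\Rightarrow(ii)\Rightarrow(iii)$ then produces a solution $P_{\star}\succeq 0$ of \eqref{eq:ric_eq_normal} with $A_{\mathrm{cl}}:=A+\frac{1}{\gamma^{2}}BB^{T}P_{\star}$ Hurwitz; choosing $Q\succ 0$ with $A_{\mathrm{cl}}^{T}Q+QA_{\mathrm{cl}}\prec 0$ and setting $P_{\varepsilon}=P_{\star}+\varepsilon Q$, a direct expansion shows the left-hand side of \eqref{eq:ric_normal} evaluated at $P_{\varepsilon}$ equals $\varepsilon(A_{\mathrm{cl}}^{T}Q+QA_{\mathrm{cl}})+\frac{\varepsilon^{2}}{\gamma^{2}}QBB^{T}Q$, which is negative definite for sufficiently small $\varepsilon>0$, while $P_{\varepsilon}\succ 0$. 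I expect the main obstacle to be the step $(ii)\Rightarrow(iii)$ — specifically the invertibility of $X_{1}$ and, above all, the positive semidefiniteness of $P$, which is the one point where the qualitative content of $(ii)$, beyond the bare solvability of the Riccati equation, is genuinely needed; a lesser nuisance is the careful handling of the degenerate cases ($x=0$, $\eta=0$, $Bu=0$, $v_{0}=0$) in the frequency-domain manipulations.
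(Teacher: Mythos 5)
The paper offers no proof of this lemma: it is quoted directly from \cite[Corollary 12.3]{zhou1998essentials} and used as a black box, so there is no internal argument to compare yours against. On its own terms, your proof is correct and is essentially the standard textbook derivation of the bounded real lemma: the completion-of-squares computation with $v=(i\omega I-A)^{-1}Bu$ for $(iv)\Rightarrow(i)$ and $(iii)\Rightarrow(i)$, the eigenvector manipulation showing that an imaginary-axis eigenvalue of $H$ forces $\gamma$ to be a singular value of $G(i\omega_0)$, the stable Lagrangian invariant subspace construction for $(ii)\Rightarrow(iii)$, and the perturbation $P_\star+\varepsilon Q$ for $(iii)\Rightarrow(iv)$ are all sound, and the degenerate cases you flag ($Bu=0$, $Cx=0$, $v_0=0$) are disposed of correctly. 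One simplification: the step you single out as the main obstacle, $P\succeq 0$ in $(ii)\Rightarrow(iii)$, does not require the Riccati differential equation limit. Once you have a symmetric solution $P$ of the algebraic Riccati equation and $A$ is Hurwitz, rewrite the equation as the Lyapunov equation $A^TP+PA=-\bigl(C^TC+\gamma^{-2}PBB^TP\bigr)\preceq 0$ and conclude $P=\int_0^\infty e^{A^Tt}\bigl(C^TC+\gamma^{-2}PBB^TP\bigr)e^{At}\,dt\succeq 0$ directly. Note also that condition (iii) only asks that $A+\gamma^{-2}BB^TP$ have no imaginary-axis eigenvalues, while your construction delivers the stronger Hurwitz property; this is harmless since it still closes the implication cycle.
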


\section{Results} \label{sec:results}
In this section, we present the results of the paper. Those related to stability are found in Subsection \ref{subsec:stability}, whereas those related to performance are found in Subsection \ref{subsec:gain}.

\subsection{Stability Analysis} \label{subsec:stability}
Define $\mu (A)$ to be the greatest real part of the spectrum of a square matrix $A$. We then have the following result.
\begin{theorem} \label{thrm:sylvester}
    Suppose $A, D \in \mathbb{R}^{n \times n}$ are cross-positive on a proper cone $K$. Then there exists $P \succ_K 0$ such that $$DP + PA \prec_K 0$$ if and only if $$\mu (A) + \mu (D) < 0.$$
\end{theorem}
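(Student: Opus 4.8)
\medskip

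\noindent\textbf{The plan.} The statement is an ``if and only if'' between a spectral condition $\mu(A)+\mu(D)<0$ and the solvability of the Sylvester inequality $DP+PA\prec_K 0$ with $P\succ_K 0$. The natural vehicle is the block matrix
$$L=\begin{pmatrix} A && 0 \\ 0 && D^T \end{pmatrix}$$
acting on $K\times K$ (or a suitable variant), together with Lemma \ref{lem:kxk}, Lemma \ref{lem:cross_stable}, and the fact from Lemma \ref{lem:cross_krein} that a cross-positive matrix attains its spectral abscissa at a real eigenvalue with eigenvector in $K$. The key algebraic observation is that $DP+PA\prec_K 0$ for an unknown $P\in\mathbb{R}^{n\times n}$ is itself a \emph{linear} condition in $P$, and it can be read as a cross-positivity/stability statement for an operator on $\mathbb{R}^{n\times n}$. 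Concretely, the Sylvester operator $\mathcal{L}(P)=DP+PA$ is cross-positive on the proper cone $\mathcal{K}$ of $K$-nonnegative matrices: if $P\in\mathcal{K}$ and $\Phi$ lies in the dual cone with $\langle \Phi,P\rangle=0$, one checks $\langle\Phi,DP+PA\rangle\ge 0$ using cross-positivity of $A$ and $D$ separately. Then Lemma \ref{lem:cross_stable} applied to $\mathcal{L}$ says: $\exists\,P\succ_K 0$ with $\mathcal{L}(P)\prec_K 0$ \emph{iff} $\mathcal{L}$ is a stable (Hurwitz) operator. So the whole theorem reduces to showing $\sigma(\mathcal{L})\subseteq\{\mathrm{Re}<0\}$ iff $\mu(A)+\mu(D)<0$.

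\medskip

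\noindent\textbf{Carrying it out.} First I would verify that $\mathcal{K}$, the cone of $K$-nonnegative matrices, is proper in $\mathbb{R}^{n\times n}$ (cited already) and that $\mathcal{L}$ is cross-positive on it; this is the analogue of ``$A,D$ Metzler $\Rightarrow$ the Sylvester map is Metzler'' and should follow by the same bilinear-pairing argument used in Lemma \ref{lem:kxk}, perhaps by first passing through the block matrix $\mathrm{diag}(A,D^T)$ on $K\times K_*$ and identifying $\mathbb{R}^{n\times n}$-operators accordingly. Second, I would compute $\sigma(\mathcal{L})$: the eigenvalues of $P\mapsto DP+PA$ are exactly the sums $\lambda+\mu$ with $\lambda\in\sigma(D)$, $\mu\in\sigma(A)$ (standard Sylvester/Kronecker-sum spectrum, $\sigma(I\otimes D + A^T\otimes I)$). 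Hence $\mu(\mathcal{L})=\mu(A)+\mu(D)$, and $\mathcal{L}$ stable $\iff \mu(A)+\mu(D)<0$. Combining with Lemma \ref{lem:cross_stable} gives the equivalence. For the ``only if'' direction one could alternatively argue directly: given $P\succ_K 0$ with $DP+PA\prec_K 0$, take $x\in K$ an eigenvector of $A$ at $\mu(A)$ and $y\in K_*$ an eigenvector of $D^T$ at $\mu(D)$ (both exist by Lemma \ref{lem:cross_krein}), and evaluate $y^T(DP+PA)x = (\mu(A)+\mu(D))\,y^TPx$; since $Px\succ_K 0$ and $y\in K_*\setminus\{0\}$ we get $y^TPx>0$, while the left side is $<0$, forcing $\mu(A)+\mu(D)<0$. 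This direct pairing argument is clean and I would include it regardless.

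\medskip

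\noindent\textbf{Main obstacle.} The delicate point is the ``if'' direction — constructing $P\succ_K 0$, i.e., a matrix mapping \emph{all} nonzero elements of $K$ into $\mathrm{int}\,K$ — rather than merely $P\succeq_K 0$. The operator-theoretic route handles this automatically provided one has the right strict version of Lemma \ref{lem:cross_stable} for the cone $\mathcal{K}$: one needs that stability of the cross-positive operator $\mathcal{L}$ yields a solution in the \emph{interior} of $\mathcal{K}$, and one must be sure that $\mathrm{int}\,\mathcal{K}$ really is the set of strictly $K$-positive matrices (the $\succ_K$ used in the theorem). Verifying this interior characterization, and checking that $-\mathcal{L}^{-1}$ maps $\mathcal{K}$ into $\mathrm{int}\,\mathcal{K}$ when a strict inequality is available (e.g. by a perturbation $D\leadsto D+\epsilon I$, $A\leadsto A+\epsilon I$ to get strictness and then taking $\epsilon\to 0$ carefully, or by invoking irreducibility-type arguments), is where the real work lies. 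A secondary technical nuisance is bookkeeping the transpose: the inequality is $DP+PA$, not $A^TP+PD^T$, so one must be consistent about whether the relevant block matrix carries $A$ and $D$ or their transposes, and about which cone ($K\times K$ vs. $K\times K_*$) the auxiliary operator lives on.
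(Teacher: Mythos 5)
Your proposal is correct in outline but takes a genuinely different route from the paper. The paper proves the ``if'' direction by the explicit integral $P=\int_0^\infty e^{Dt}(-Q)e^{At}\,dt$ for an arbitrary $Q\prec_K 0$: convergence follows from $\mu(A)+\mu(D)<0$, $K$-nonnegativity of $P$ follows from Lemma~\ref{lem:cross_eAt} and closedness of the cone of $K$-nonnegative matrices, and a small perturbation then pushes $P$ into the interior; the ``only if'' direction is obtained by pairing with a Perron eigenvector $v\in K$ of $A$ (Lemma~\ref{lem:cross_krein}) and applying Lemma~\ref{lem:cross_stable} to $D+\mu(A)I$, which is close in spirit to, but not identical with, your direct bilinear argument $y^T(DP+PA)x=(\mu(A)+\mu(D))\,y^TPx$ (which is valid: $D^T$ is cross-positive on the proper cone $K_*$, so Lemma~\ref{lem:cross_krein} supplies $y$). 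Your main route instead lifts everything to the Sylvester operator $\mathcal{L}$ on the proper cone $\mathcal{K}$ of $K$-nonnegative matrices and applies Lemma~\ref{lem:cross_stable} once, using the Kronecker-sum spectrum $\sigma(\mathcal{L})=\sigma(A)+\sigma(D)$. This works, and it buys a more uniform proof that yields the strict solution directly rather than via a perturbation; but it owes two facts the paper's integral construction sidesteps: (a) $\mathcal{K}_*$ is the (closed, in fact finitely generated in the Carath\'eodory sense) conic hull of the rank-one matrices $yx^T$ with $x\in K$, $y\in K_*$, which is what reduces cross-positivity of $\mathcal{L}$ on $\mathcal{K}$ to cross-positivity of $A$ and $D$ separately (via $Px\in K$ and $P^Ty\in K_*$); and (b) $\mathrm{int}\,\mathcal{K}$ coincides with the set of matrices mapping $K\setminus\{0\}$ into $\mathrm{int}\,K$, which is exactly the paper's $\succ_K$ for matrices. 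You correctly flag (b) as the delicate point; both facts are true and provable by compactness-of-the-base arguments, so the gap is one of execution rather than of substance. The side remark about routing through $\mathrm{diag}(A,D^T)$ on $K\times K$ via Lemma~\ref{lem:kxk} is a red herring and can be dropped.
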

\begin{proof}
    See Section \ref{sec:proofs}.
\end{proof}
For self-dual cones in particular, this reduces to a stability test.
\begin{corollary} \label{cor:lyapunov}
    Suppose $A \in \mathbb{R}^{n \times n}$ is cross-positive on a self-dual proper cone $K$. Then there exists $P \succ_K 0$ such that $$A^T P + PA \prec_K 0$$ if and only if $A$ is Hurwitz.
\end{corollary}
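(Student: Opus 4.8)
The plan is to obtain the corollary as an immediate specialization of Theorem \ref{thrm:sylvester} to the pair $(A, D) = (A, A^T)$. Two small preliminary facts are all that is needed: that $A^T$ is cross-positive on $K$, and that $\mu(A^T) = \mu(A)$.

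For the first fact I would argue as follows. By definition, $A$ cross-positive on $K$ means that for every $x \in K$ and $y \in K_*$ with $y^T x = 0$ one has $y^T A x \geq 0$. Since $K$ is self-dual, $K_* = K$, so this reads: for all $x, y \in K$ with $y^T x = 0$, $y^T A x \geq 0$. Writing $y^T A x = x^T A^T y$ and interchanging the (now interchangeable) roles of $x$ and $y$, we conclude that for all $x, y \in K$ with $x^T y = 0$ one has $x^T A^T y \geq 0$; invoking $K = K_*$ once more, this is exactly the statement that $A^T$ is cross-positive on $K$. The second fact is standard: $A$ and $A^T$ have the same characteristic polynomial, hence $\sigma(A^T) = \sigma(A)$ and therefore $\mu(A^T) = \mu(A)$.

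With these in hand, Theorem \ref{thrm:sylvester}, applied with $D = A^T$ (legitimate since both $A$ and $A^T$ are cross-positive on $K$), states that there exists $P \succ_K 0$ with $A^T P + P A \prec_K 0$ if and only if $\mu(A) + \mu(A^T) < 0$. Substituting $\mu(A^T) = \mu(A)$, the right-hand condition becomes $2\mu(A) < 0$, i.e. $\mu(A) < 0$, which is precisely the assertion that $A$ is Hurwitz. This establishes the claimed equivalence.

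I do not anticipate any real obstacle in this argument: the entire analytic content is carried by Theorem \ref{thrm:sylvester}, and the only thing specific to the corollary is the routine observation that self-duality of $K$ makes cross-positivity of $A$ equivalent to cross-positivity of $A^T$, together with the spectral identity $\mu(A) = \mu(A^T)$. If anything requires a word of care, it is simply making the symmetry argument for $A^T$ precise, but this is elementary.
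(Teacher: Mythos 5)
Your proposal is correct and follows essentially the same route as the paper: both reduce the corollary to Theorem \ref{thrm:sylvester} with $D = A^T$, using self-duality of $K$ to transfer cross-positivity from $A$ to $A^T$ and the identity $\mu(A^T) = \mu(A)$ to turn the condition $\mu(A)+\mu(A^T)<0$ into the Hurwitz property. The only cosmetic difference is that the paper phrases the cross-positivity of $A^T$ via $K_{**}=K$ rather than by directly swapping the roles of $x$ and $y$ after substituting $K_*=K$.
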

\begin{proof}
    See Section \ref{sec:proofs}.
\end{proof}
We close this subsection with some remarks.
\begin{remark}
    Corollary \ref{cor:lyapunov} should be compared to Lyapunov's Theorem, e.g., \cite{rugh1996linear}. Note, however, that compared to this standard result, the solution $P$ in Corollary \ref{cor:lyapunov} does not have to be symmetric. For example, when $K$ is taken as the nonnegative orthant so that cross-positivity reduces to the Metzler property, $P \succ_K 0$ is equivalent to $P > 0$, i.e., entrywise positivity. 
\end{remark}
\begin{remark}
    In this remark, we give a Lyapunov function-like interpretation of Theorem \ref{thrm:sylvester}. Given the two systems $\dot{x} = Ax$ and $\dot{y} = D^T y$ with $A$ and $D$ cross-positive on $K$, clearly there exists a solution $P \succ_K 0$ if and only if there exists a quadratic function $V(x, y) = y^T P x$ such that $V(x, y) > 0$ and $\dot{V}(x, y) = y^T (D P + PA) x < 0$ for all nonzero $x \in K$ and $y \in K_*$. Thus, for nonzero trajectories in $K$ and $K_*$, respectively, $V(x, y)$ is always positive and decreasing. Of course, it is intuitive that this can happen despite one of the systems being unstable, so long as the trajectories of the other system converge faster to the origin: this is consistent with the condition $\mu(A) + \mu(D) < 0$.
\end{remark}

\subsection{Performance Analysis} \label{subsec:gain}
In this subsection, we consider the LTI system
\begin{equation} \label{eq:sys}
\begin{aligned}
    \dot{x} &= Ax + B u \\
    z &= Cx
\end{aligned}
\end{equation}
where $x(t) \in \mathbb{R}^n$ is the state, $u(t) \in \mathbb{R}^{m}$ is the control input and $z(t) \in \mathbb{R}^p$ is the regulated output. Denote by $G$ the open-loop transfer function from $u$ to $z$.

In keeping with \cite{angeli2003monotone} and \cite{shen2017input}, we make the following definition.
\begin{definition} \label{def:cones}
    System (\ref{eq:sys}) is said to be monotone with respect to the proper cones $( K_u, K_x, K_z ) \subseteq ( \mathbb{R}^m , \mathbb{R}^n , \mathbb{R}^p )$ if $u(t) \in K_u$ for all $t \geq 0$ and $x(0) \in K_x$ together imply that $x(t) \in K_x$ and $z(t) \in K_z$ for all $t \geq 0$. 
\end{definition}

The main result of this subsection is the following.
\begin{theorem} \label{thrm:kyp_ext}
    Consider system (\ref{eq:sys}) with $A$ Hurwitz and let $\gamma > 0$ and the three self-dual proper cones $( K_u, K_x, K_z ) \subseteq ( \mathbb{R}^m , \mathbb{R}^n , \mathbb{R}^p )$ be given. Suppose now that (\ref{eq:sys}) is monotone with respect to $( K_u, K_x, K_z )$. Then the following conditions are equivalent:
    \begin{enumerate}
        \item[(i)] $\lVert G \rVert _\infty < \gamma$.
        \item[(ii)] $L$ is Hurwitz, where $$L = \begin{pmatrix}
            A & \frac{1}{\gamma^2}BB^T \\
            C^T C & A^T
        \end{pmatrix}.$$
        \item[(iii)] There exists $P \succeq_{K_x} 0$ such that 
        \begin{equation} \label{eq:ric} 
        \frac{1}{\gamma^{2}}PB B^T P + A^T P + PA + C^T C = 0 
        \end{equation} 
        and $A + \frac{1}{\gamma ^2}BB^TP$ is Hurwitz.
        \item[(iv)] There exists $P \in \mathbb{R}^{n \times n}$ with $P + P^T \succ 0$ such that \begin{equation} \label{eq:ric_extended} \frac{1}{\gamma^{2}} PB B^T P^T + A^T P^T + PA + C^T C \prec 0. \end{equation}
        \item[(v)] There exists $p, q \succ_{K_x} 0$ such that
        \begin{equation*}
        \begin{aligned}
            Ap + \frac{1}{\gamma ^2} BB^T q &\prec_{K_x} 0 \\
            C^T C p + A^T q &\prec_{K_x} 0.
        \end{aligned}            
        \end{equation*}
    \end{enumerate}
\end{theorem}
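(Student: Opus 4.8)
The strategy is to prove a cycle of implications linking all five conditions, leaning on the machinery already assembled in the paper. The key structural observation is that monotonicity of (\ref{eq:sys}) with respect to the self-dual cones $(K_u, K_x, K_z)$ forces $A$ to be cross-positive on $K_x$ and $B, C^T$ to be cone-nonnegative in the appropriate sense, so that — since $K_x$ is self-dual and $K_x \times K_x$ is a proper cone — the Hamiltonian-like matrix $L$ in (ii) is cross-positive on $K_x \times K_x$ by Lemma \ref{lem:kxk} (here one uses that $\tfrac1{\gamma^2}BB^T \succeq_{K_x} 0$ and $C^T C \succeq_{K_x} 0$, which follow from $B$ mapping $K_u$ into $K_x$, $C$ mapping $K_x$ into $K_z$, and self-duality of all three cones). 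This is what unlocks the cone-theoretic Riccati result, Lemma \ref{lem:emil}, and the cross-positive stability test, Lemma \ref{lem:cross_stable}.

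Concretely I would argue as follows. First, (i) $\Leftrightarrow$ (ii): since $H$ (with the sign pattern of Lemma \ref{lem:kyp}) is similar to $L$ via conjugation by $\mathrm{diag}(I, -I)$ — which flips the signs of the off-diagonal blocks — Lemma \ref{lem:kyp}(ii) says $\|G\|_\infty < \gamma$ iff $H$, equivalently $L$, has no imaginary-axis eigenvalues; but $L$ is cross-positive on the proper cone $K_x \times K_x$, so by Lemma \ref{lem:cross_krein} its spectral abscissa $\mu(L)$ is itself an eigenvalue, hence $L$ has no imaginary-axis eigenvalue iff $\mu(L) < 0$, i.e. iff $L$ is Hurwitz. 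Next, (ii) $\Rightarrow$ (iii): apply Lemma \ref{lem:emil} directly with the block data $(A, \tfrac1{\gamma^2}BB^T, C^TC, A^T)$; stability of $L$ yields $X_* = P \succeq_{K_x} 0$ solving the Riccati equation (\ref{eq:ric}) with $A + \tfrac1{\gamma^2}BB^T P$ Hurwitz. The implication (iii) $\Rightarrow$ (iv) is essentially immediate: a symmetric $P$ satisfying the equation (\ref{eq:ric_eq_normal})-type identity can be perturbed — or one invokes the standard fact that existence of the stabilizing semidefinite solution to the Riccati equation gives, via a small perturbation of $\gamma$ or of $C$, a positive-definite $P$ satisfying the strict inequality (\ref{eq:ric_normal}); note here $P+P^T = 2P$ since we may take $P$ symmetric, so (iv) is genuinely weaker than the symmetric statement. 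For (iv) $\Rightarrow$ (i): complete the square in (\ref{eq:ric_extended}) to extract $(A + \tfrac1{\gamma^2}BB^TP)$ in a dissipation inequality, use Lemma \ref{lem:lyap_real} to get that $P$ certifies an $H_\infty$-type bound in the standard (non-conic) bounded real lemma sense — the symmetrized inequality $\bigl(A+\tfrac1{\gamma^2}BB^TP\bigr)^T\!\bigl(\tfrac{P+P^T}{2}\bigr) + \bigl(\tfrac{P+P^T}{2}\bigr)\bigl(A+\tfrac1{\gamma^2}BB^TP\bigr) + \tfrac1{\gamma^2}\bigl(\tfrac{P+P^T}{2}\bigr)BB^T\bigl(\tfrac{P+P^T}{2}\bigr) + C^TC \preceq 0$ after a Schur-complement rearrangement recovers (\ref{eq:ric_normal}) with $\tfrac{P+P^T}{2} \succ 0$ in place of $P$, and then Lemma \ref{lem:kyp}(iv) $\Rightarrow$(i) closes it.

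Finally, for the conic-inequality condition (v) I would close a small sub-loop, e.g. (ii) $\Rightarrow$ (v) $\Rightarrow$ (ii). For (ii) $\Rightarrow$ (v): $L$ is cross-positive on $K_x\times K_x$ and Hurwitz, so by Lemma \ref{lem:cross_stable}(ii) there is $(p,q) \succ_{K_x \times K_x} 0$ — that is, $p \succ_{K_x} 0$ and $q \succ_{K_x} 0$ — with $L\,(p;q) \prec_{K_x\times K_x} 0$, which unpacked is exactly the pair of strict conic inequalities $Ap + \tfrac1{\gamma^2}BB^Tq \prec_{K_x} 0$ and $C^TCp + A^Tq \prec_{K_x} 0$. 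Conversely (v) $\Rightarrow$ (ii) is the reverse direction of the same Lemma \ref{lem:cross_stable} equivalence applied to the cross-positive matrix $L$: a strictly interior $(p,q)$ mapped strictly into the interior of $K_x\times K_x$ certifies stability of $L$. I expect the main obstacle to be the bookkeeping in the two non-trivial analytic implications — (i)$\Leftrightarrow$(ii) requires care that the similarity transform and the Krein–Rutman-type argument of Lemma \ref{lem:cross_krein} together genuinely rule out \emph{all} imaginary eigenvalues (not merely the rightmost one), and (iv)$\Rightarrow$(i) requires the nonsymmetric-to-symmetric reduction via Lemma \ref{lem:lyap_real} and a Schur-complement manipulation to be valid without extra definiteness hypotheses; the cone-preserving pieces (ii)$\Rightarrow$(iii) and (ii)$\Leftrightarrow$(v) are then comparatively mechanical given Lemmas \ref{lem:kxk}, \ref{lem:emil} and \ref{lem:cross_stable}.
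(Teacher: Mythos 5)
Your structural setup (cross-positivity of $L$ on $K_x\times K_x$ via Lemma \ref{lem:kxk} and self-duality) and the two cone-theoretic links --- (ii)$\Rightarrow$(iii) by Lemma \ref{lem:emil} and (ii)$\Leftrightarrow$(v) by Lemma \ref{lem:cross_stable} --- match the paper exactly. But all three of your remaining implications have genuine gaps, and the most serious one is (i)$\Leftrightarrow$(ii). The matrices $H$ and $L$ are \emph{not} similar: conjugation by $\mathrm{diag}(I,-I)$ sends $H$ to $\bigl(\begin{smallmatrix} A & -\gamma^{-2}BB^T \\ C^TC & -A^T\end{smallmatrix}\bigr)$, whose lower-right block is still $-A^T$, not $+A^T$; the paper only ever uses the \emph{product} identity $H=\mathrm{diag}(I,-I)L$ (to relate determinants in Theorem \ref{thrm:gain}), which says nothing about spectra. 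And even if the spectra did agree, ``$L$ has no imaginary-axis eigenvalues'' does not yield ``$\mu(L)<0$'': Lemma \ref{lem:cross_krein} makes the rightmost eigenvalue real, but it could be real and positive. Passing from the $H_\infty$ bound to Hurwitzness of $L$ is precisely the nontrivial content of the theorem; the paper earns it by the chain (i)$\Rightarrow$(iv)$\Rightarrow$(ii), where (iv)$\Rightarrow$(ii) is a $2n\times 2n$ congruence argument combined with Lemma \ref{lem:lyap_real} and cross-positivity of $L$, and it returns to (i) via (iii)$\Rightarrow$(i), block-triangularizing the true Hamiltonian $H$ and using the symmetry of Hamiltonian spectra together with Lemma \ref{lem:kyp}.

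The other two gaps: in your (iii)$\Rightarrow$(iv) you treat the solution of (\ref{eq:ric}) as a symmetric positive semidefinite matrix, but $P\succeq_{K_x}0$ means $P$ is \emph{cone-preserving} and is in general nonsymmetric --- the orders $\succeq_{K_x}$ and $\succeq$ are unrelated here, so ``$P+P^T=2P$ since we may take $P$ symmetric'' is unjustified. In your (iv)$\Rightarrow$(i), replacing $P$ by its symmetric part $P_s=\tfrac{P+P^T}{2}$ does not preserve the inequality (\ref{eq:ric_extended}): writing $P=P_s+P_a$ with $P_a$ skew, the term $A^TP^T+PA$ picks up the contribution $P_aA+(P_aA)^T$, which does not vanish, and $PBB^TP^T\neq P_sBB^TP_s$ in general. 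Indeed (iv)$\Rightarrow$(i) is \emph{false} without monotonicity, so any proof must use the cone structure, which your symmetrization does not. The paper avoids both problems by routing these implications through (ii): (iv)$\Rightarrow$(ii)$\Rightarrow$(iii)$\Rightarrow$(i)$\Rightarrow$(iv), so that the only appeal to the classical bounded real lemma is (i)$\Rightarrow$(iv) (where the symmetric solution of Lemma \ref{lem:kyp}(iv) trivially satisfies (\ref{eq:ric_extended})) and (iii)$\Rightarrow$(i) (via condition (ii) of Lemma \ref{lem:kyp}).
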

\begin{proof}
    See Section \ref{sec:proofs}.
\end{proof}
The next result shows that monotone systems w.r.t. self-dual cones also achieve their $H_\infty$ norm at zero frequency.
\begin{theorem} \label{thrm:gain}
    Consider system (\ref{eq:sys}) with $A$ Hurwitz and suppose that system (\ref{eq:sys}) is monotone with respect to the self-dual proper cones $( K_u, K_x, K_z ) \subseteq ( \mathbb{R}^m , \mathbb{R}^n , \mathbb{R}^p )$. Then $$\lVert G \rVert _\infty = \lVert G(0) \rVert .$$
\end{theorem}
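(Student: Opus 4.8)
The plan is to reduce everything to Theorem \ref{thrm:kyp_ext} together with a monotonicity property of the spectral radius on cone-preserving maps. Since $\lVert G\rVert_\infty = \sup_\omega \lVert G(i\omega)\rVert \geq \lVert G(0)\rVert$ holds trivially, it suffices to prove $\lVert G\rVert_\infty \leq \lVert G(0)\rVert$, and for this it is enough to show that $\lVert G\rVert_\infty < \gamma$ for every $\gamma > \lVert G(0)\rVert$. By Theorem \ref{thrm:kyp_ext} (equivalence of (i) and (ii)) this amounts to showing that
$$ L = \begin{pmatrix} A & \frac{1}{\gamma^2}BB^T \\ C^TC & A^T \end{pmatrix} $$
is Hurwitz. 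First I would unpack monotonicity: arguing as for Lemma \ref{lem:cross_eAt} (cf. \cite{angeli2003monotone}, \cite{shen2017input}), $A$ is cross-positive on $K_x$, $B K_u \subseteq K_x$ and $C K_x \subseteq K_z$; dualizing the latter two and invoking self-duality gives $B^T K_x \subseteq K_u$ and $C^T K_z \subseteq K_x$, hence $BB^T \succeq_{K_x} 0$, $C^TC \succeq_{K_x} 0$, while $A^T$ is cross-positive on $K_x$ because $(e^{At})^T$ preserves $(K_x)_* = K_x$. Lemma \ref{lem:kxk} then shows that $L$ is cross-positive on the proper cone $K_x \times K_x$, and in particular the impulse response $H(t) := C e^{At} B$ maps $K_u$ into $K_z$ for all $t \geq 0$.

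The key step exploits the spectrum of $L$. By Lemma \ref{lem:cross_krein}, $\mu(L)$ is itself an eigenvalue of $L$, so if $L$ fails to be Hurwitz then it has a \emph{real} eigenvalue $\lambda \geq 0$; fix a corresponding eigenvector $\binom{v}{w}$. Since $A$ and $A^T$ are Hurwitz and $\lambda \geq 0$, both $\lambda I - A$ and $\lambda I - A^T$ are invertible; eliminating $v$ and $w$ from $L\binom{v}{w} = \lambda\binom{v}{w}$, and checking that $B^T w \neq 0$ (otherwise $v = 0$ and then $w = 0$, so the eigenvector would vanish), yields $\gamma^2 (B^T w) = G_\lambda^T G_\lambda (B^T w)$ with $G_\lambda := C(\lambda I - A)^{-1}B$. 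Hence $\gamma^2 \in \sigma(G_\lambda^T G_\lambda)$, so $\gamma^2 \leq \rho(G_\lambda^T G_\lambda) = \lVert G_\lambda\rVert^2$. It therefore remains to prove $\lVert G_\lambda\rVert \leq \lVert G(0)\rVert$ for $\lambda \geq 0$: this forces $\gamma \leq \lVert G(0)\rVert$, contradicting the choice of $\gamma$, so $L$ must be Hurwitz, giving $\lVert G\rVert_\infty < \gamma$ and, letting $\gamma \downarrow \lVert G(0)\rVert$, the theorem.

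For the remaining bound, write $G_\lambda = \int_0^\infty e^{-\lambda t} H(t)\,dt$ and $G(0) = G_0 = \int_0^\infty H(t)\,dt$. Because $1 - e^{-\lambda t} \geq 0$ and $H(t) K_u \subseteq K_z$, both $G_\lambda$ and $G_0 - G_\lambda$ map $K_u$ into $K_z$, hence (by self-duality) $G_\lambda^T$ and $(G_0 - G_\lambda)^T$ map $K_z$ into $K_u$. Consequently $G_\lambda^T G_\lambda$ and $G_0^T G_0$ are $K_u$-nonnegative and
$$ G_0^T G_0 - G_\lambda^T G_\lambda = (G_0 - G_\lambda)^T G_0 + G_\lambda^T (G_0 - G_\lambda) \succeq_{K_u} 0, $$
i.e. $G_\lambda^T G_\lambda \preceq_{K_u} G_0^T G_0$. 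Monotonicity of the spectral radius on $K_u$-nonnegative matrices (\cite{berman1994nonnegative}; or directly, iterate $(G_0^T G_0)^k \succeq_{K_u} (G_\lambda^T G_\lambda)^k$ and pair with a Perron eigenvector of $G_\lambda^T G_\lambda$ and a dual functional) then gives $\rho(G_\lambda^T G_\lambda) \leq \rho(G_0^T G_0)$, that is $\lVert G_\lambda\rVert \leq \lVert G(0)\rVert$, as both matrices are symmetric and positive semidefinite.

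I expect the main obstacle to be the passage from the frequency domain to a cone inequality: the entrywise bound $\lvert G(i\omega)\rvert \leq G(0)$ available for positive systems has no analogue for general self-dual cones, so the argument has to detour through the Hamiltonian-type matrix $L$ of Theorem \ref{thrm:kyp_ext}(ii), using cross-positivity (Lemma \ref{lem:cross_krein}) to pin the critical eigenvalue of $L$ to the real nonnegative axis and thereby convert it into an eigenvalue of the real matrix $G_\lambda^T G_\lambda$. The two auxiliary points requiring care are the spectral-radius monotonicity for general proper cones and the factorization $G_0^T G_0 - G_\lambda^T G_\lambda = (G_0 - G_\lambda)^T G_0 + G_\lambda^T(G_0 - G_\lambda)$, which is what makes the difference visibly $K_u$-nonnegative.
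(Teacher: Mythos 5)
Your proof is correct, but it takes a genuinely different route from the paper's. Both arguments start the same way: reduce to condition (ii) of Theorem \ref{thrm:kyp_ext}, establish that $L$ is cross-positive on $K_x \times K_x$, and use Lemma \ref{lem:cross_krein} to pin the critical eigenvalue of $L$ to the real nonnegative axis. From there the paper argues by contradiction via a homotopy in $\gamma$: it uses continuity of the Perron root $\lambda(\gamma)$ together with the fact that $L(\gamma)$ is Hurwitz for large $\gamma$ to produce some $\gamma_0$ at which $L(\gamma_0)$, and hence the Hamiltonian $H(\gamma_0)$, is singular, and then contradicts this with a determinant identity (Lemma \ref{lem:gain}) showing $\det H \neq 0$ whenever $\gamma > \lVert CA^{-1}B\rVert$. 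You instead eliminate the eigenvector of $L$ directly to obtain $\gamma^2 \in \sigma(G_\lambda^T G_\lambda)$ for a real $\lambda \geq 0$, and then bound $\lVert G_\lambda\rVert \leq \lVert G_0\rVert$ via the impulse-response representation, the factorization $G_0^T G_0 - G_\lambda^T G_\lambda = (G_0-G_\lambda)^T G_0 + G_\lambda^T(G_0-G_\lambda)$, and monotonicity of the spectral radius on $K_u$-nonnegative matrices. Your route avoids the continuity/intermediate-value argument and the determinant computation, and it yields the stronger pointwise conclusion $\lVert C(\lambda I - A)^{-1}B\rVert \leq \lVert G(0)\rVert$ for all real $\lambda \geq 0$; the price is that it leans on spectral-radius monotonicity for general proper cones, which is true and classical (and your sketch of it via $(G_0^TG_0)^k \succeq_{K_u} (G_\lambda^TG_\lambda)^k$, a Perron eigenvector, and a strictly positive dual functional is sound) but is an extra Perron--Frobenius-type ingredient the paper does not need. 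The only detail worth making explicit is that the dual functional must be taken in the interior of $K_{u*} = K_u$ so that its pairing with the Perron eigenvector is strictly positive.
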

\begin{proof}
    See Section \ref{sec:proofs}.
\end{proof}
We close this subsection with some remarks.
\begin{remark} \label{rem:cone_def}
    It is straightforward to verify using Lemma \ref{lem:cross_eAt} that Definition \ref{def:cones} is equivalent to the fact that $A$ is cross-positive on $K_x$, $BK_u \subseteq K_x$ and $CK_x \subseteq K_z$, see e.g., \cite{shen2017input}. Further, when the three cones are taken as the nonnegative orthant, we regain the definition of an (internally) positive system, e.g., \cite{rantzer2018tutorial}.
\end{remark}
\begin{remark} \label{rem:cf}
    Conditions (ii) and (iii) in Theorem \ref{thrm:kyp_ext} appear to be analogous to the imaginary axis eigenvalue condition on the Hamiltonian and the Riccati equation solution condition in the standard bounded real lemma, respectively, cited here for convenience in Lemma \ref{lem:kyp}. Condition (v) is perhaps best compared to condition (ii) in \cite[Theorem 2]{shen2016some} in which twice the amount of variables and inequalities are used to characterize the $\gamma$-suboptimality of the static gain for symmetric cones.
\end{remark}

\section{Illustrative Examples} \label{sec:example}
In this section, we provide examples of some of the results in Section \ref{sec:results}. For the purpose of illustration, we shall consider only the nonnegative orthant; the reader is referred to the references provided in Section \ref{sec:intro} for the many examples and applications on alternative cones.

\subsection{A closed-loop positive $H_\infty$ Optimal Controller} \label{subsec:example_irrigation}
In this subsection, we consider the dynamics
$$\dot{x} = \begin{pmatrix} -1 && 0 && 0 \\ 1 && -2 && 0 \\ 0 && 2 && -4 \end{pmatrix} x + \begin{pmatrix} -1 && 0 \\ 1 && -1 \\ 0 && 1 \end{pmatrix}u + w.$$
The above system can be thought of as a simple model of a small irrigation network consisting of three pools connected in series, e.g., \cite{cantoni2007control}. The state matrix $A$ suggests that each pool decays towards some equilibrium level, and that the decaying content does not vanish but is instead transferred over to the next pool. Further, the input matrix $B$ suggests that we may actuate a transfer of contents between two adjacent pools. Finally, each pool is subject to disturbing inflows $w$.

Now, it was shown in \cite[Theorem 2]{vladu2022decentralized} that the controller $$K_* = B^T A^{-T} = \begin{pmatrix} 1 & 0 & 0 \\ 0 & \frac{1}{2} & 0 \end{pmatrix}$$ results in the following three desirable properties for the closed-loop system from $w$ to $(x, u)$:
\begin{enumerate}
    \item[a)] $K_*$ is $H_\infty$ optimal.
    \item[b)] $K_*$ is closed-loop positive.
    \item[c)] $K_*$ is diagonal.
\end{enumerate}
In a sense, $K_*$ arguably appears to be a very natural candidate controller. However, this controller cannot in fact result from the $H_\infty$ synthesis Riccati inequality \cite[Section 7.5.1]{boyd1994linear} \begin{equation} \label{eq:ric_p_sym}
    A^T P + P A + P \big (\frac{1}{\gamma ^2} I - BB^T \big) P + C^T C \prec 0,
\end{equation}
as there is in fact no symmetric $P$ such that $K_* = -B^T P$. In order to see this, suppose on the contrary that such a $$P = P^T = \begin{pmatrix} p_{11} & p_{12} & p_{13} \\ p_{12} & p_{22} & p_{23} \\ p_{13} & p_{23} & p_{33} \end{pmatrix}$$ existed. We would then have $$\begin{pmatrix} 1 & 0 & 0 \\ 0 & \frac{1}{2} & 0 \end{pmatrix} = \begin{pmatrix} p_{11} - p_{12} & p_{12} - p_{22} & p_{13} - p_{23} \\ p_{12} - p_{13} & p_{22} - p_{23} & p_{23} - p_{33} \end{pmatrix}.$$ But this would imply that $p_{22} = p_{12} = p_{13} = p_{23}$ so that $0 = p_{22} - p_{23} = \frac{1}{2}$, a contradiction.

On the other hand, matters become different once we consider the extended version $$A^T P^T + P A + P \big (\frac{1}{\gamma ^2} I - BB^T \big) P^T + C^T C \prec 0.$$ It is straightforward to show that the nonsymmetric matrix $P_* = -A^{-1}$ is a solution for all $\gamma > \lVert (AA^T + BB^T )^{-1} \rVert ^\frac{1}{2}$, where the latter value is known to be a lower bound over all stabilizing controllers. Thus, invoking condition (iv) in Theorem \ref{thrm:kyp_ext} for the corresponding closed-loop system, $K_* = -B^T P_*^T = B^T A^{-T}$ is seen to be optimal.

A synthesis procedure like (\ref{eq:ric_p_sym}) which fails to account for a natural controller such as $K_*$ is arguably incomplete. Of course, any stabilizing $\gamma$-suboptimal controller can still be reached by performing a variable change and searching over two matrix variables instead of one in the corresponding LMI \cite{boyd1994linear}. However, this example shows that there is substance in the middle ground offered by the above nonsymmetric extension, and searching over one variable instead of two may prove valuable in the context of large-scale systems.

\subsection{A Positive Solution to the Sylvester Equation} \label{subsec:example_sylvester}
In this subsection, we illustrate Theorem \ref{thrm:sylvester} on the nonnegative orthant by considering the two matrices $$A = \begin{pmatrix}
    -2 & 1 \\ 0 & -2
\end{pmatrix}, \; \; \; \; D = \begin{pmatrix}
    -1 & 0 \\ 4 & 1 
\end{pmatrix}$$ which are clearly Metzler and thus cross-positive on the nonnegative orthant. Since further their eigenvalues lie on the diagonal, it is clear that $\mu (A) + \mu (D) = -2 + 1 = -1 < 0$ so that Theorem \ref{thrm:sylvester} gives the existence of a $P > 0$ such that $DP + PA < 0$. One such $P$ is given by $$P = \frac{1}{9} \begin{pmatrix}
    3 & 4 \\ 21 & 46
\end{pmatrix} > 0$$ since then $$DP + PA = \begin{pmatrix}
    -1 & -1 \\ -1 & -1
\end{pmatrix} < 0.$$

\section{Proofs} \label{sec:proofs}
In this section, we supply proofs to the results in Section \ref{sec:results}. For this purpose, we shall require the following lemma.

\begin{lemma} \label{lem:gain}
    Suppose $\gamma > 0$ and the matrices $A \in \mathbb{R}^{n \times n}$, $B \in \mathbb{R}^{n \times m}$ and $C \in \mathbb{R}^{p \times n}$ are given with $A$ invertible. If $\lambda$ is an eigenvalue of $$H = \begin{pmatrix} A && \frac{1}{\gamma ^{2}} BB^T \\ -C^T C && -A^T \end{pmatrix},$$ then $\lambda \ne 0$ for all $\gamma > \lVert CA^{-1} B \rVert$. Further, if $\gamma = \lVert CA^{-1} B \rVert$, then $H$ has a zero eigenvalue.
\end{lemma}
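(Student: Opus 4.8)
The statement is essentially a restatement, via the Hamiltonian matrix $H$, of the well-known fact that the $H_\infty$ norm of the transfer function $G(s) = C(sI-A)^{-1}B$ at $s=0$ equals $\lVert CA^{-1}B\rVert$ (up to sign: $G(0) = -CA^{-1}B$), together with the classical link between imaginary-axis eigenvalues of $H$ and the condition $\sigma_{\max}(G(i\omega)) = \gamma$. The cleanest route is to work directly with the Schur complement. The plan is to show that $0 \in \sigma(H)$ if and only if $\det H = 0$, and then to factor $\det H$ in terms of $\det A$ and the singular values of $CA^{-1}B$.

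First I would observe that, since $A$ is invertible, $H$ is invertible if and only if its Schur complement with respect to the $(1,1)$ block is invertible, i.e.
\[
\det H = \det(A)\,\det\!\Big(-A^T + C^T C\,A^{-1}\,\tfrac{1}{\gamma^2}BB^T\Big)
= \det(A)\,\det\!\Big(-A^T\Big)\det\!\Big(I - \tfrac{1}{\gamma^2}A^{-T}C^T C A^{-1} B B^T\Big).
\]
Hence $0\in\sigma(H)$ iff the last determinant vanishes, i.e. iff $\tfrac{1}{\gamma^2}(CA^{-1}B)(CA^{-1}B)^T$ has $1$ as an eigenvalue — equivalently iff $\gamma^2 \le \lVert CA^{-1}B\rVert^2$ is attained, since the eigenvalues of that matrix are $\lambda_i/\gamma^2$ where $\lambda_i \ge 0$ are the squared singular values of $CA^{-1}B$, with largest value $\lVert CA^{-1}B\rVert^2/\gamma^2$. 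So: if $\gamma > \lVert CA^{-1}B\rVert$, every eigenvalue of $\tfrac{1}{\gamma^2}(CA^{-1}B)(CA^{-1}B)^T$ is strictly less than $1$, the determinant $\det(I - \cdot)$ is nonzero, $\det H \ne 0$, and $H$ has no zero eigenvalue. If $\gamma = \lVert CA^{-1}B\rVert$, then $1$ is exactly an eigenvalue of $\tfrac{1}{\gamma^2}(CA^{-1}B)(CA^{-1}B)^T$ (attained by the top singular value), so $I - \tfrac{1}{\gamma^2}A^{-T}C^TCA^{-1}BB^T$ is singular, $\det H = 0$, and $H$ has a zero eigenvalue.

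The main technical point to get right is the manipulation of non-square factors: $C^TC$ and $BB^T$ are $n\times n$, but $CA^{-1}B$ is $p\times m$, so I would use Sylvester's determinant identity $\det(I_n - MN) = \det(I_k - NM)$ to pass between $\det(I_n - \tfrac{1}{\gamma^2}A^{-T}C^TC A^{-1}BB^T)$ and a determinant of the size of $CA^{-1}B(CA^{-1}B)^T$ (or its transpose), whose eigenvalues are transparently the squared singular values of $CA^{-1}B$ scaled by $\gamma^{-2}$. This is a routine but slightly fiddly bookkeeping step and is the only place where care is needed; everything else is a direct Schur-complement computation. No monotonicity or cone structure is used here — this lemma is purely about the Hamiltonian at zero frequency and is later invoked inside the proofs of Theorems \ref{thrm:kyp_ext} and \ref{thrm:gain}.
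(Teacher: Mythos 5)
Your proposal is correct and follows essentially the same route as the paper: a Schur-complement factorization of $\det H$ into $\det(A)\det(-A^T)\det(I - \gamma^{-2}\cdot(\cdot))$, followed by the identity $\det(I-MN)=\det(I-NM)$ to reduce to the spectrum of $\gamma^{-2}Q Q^T$ (the paper uses $Q^T Q$ with $Q = CA^{-1}B$, which has the same nonzero eigenvalues), and the observation that its largest eigenvalue is $\lVert CA^{-1}B\rVert^2/\gamma^2$. The only cosmetic difference is that choice of $QQ^T$ versus $Q^TQ$, which does not affect the argument.
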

\begin{proof}
    We have
    \begin{equation*}
    \begin{aligned}
        \mathrm{det}(H) &= \mathrm{det}(A)\mathrm{det}(-A^T - (-C^T C) A^{-1} (\gamma^{-2} BB^T )) \\
        &= \mathrm{det}(A) \mathrm{det}(-A^T ) \\ & \; \; \; \; \; \cdot \mathrm{det}(I - \gamma^{-2} A^{-T}C^T C A^{-1} BB^T ) \\ 
        &= \mathrm{det}(A) \mathrm{det}(-A^T) \\ & \; \; \; \; \; \cdot \mathrm{det}(I - \gamma^{-2} (B^T A^{-T}C^T) (C A^{-1} B )) \\
        &= \mathrm{det}(A) \mathrm{det}(-A^T) \mathrm{det}(I - \gamma ^{-2}Q^T Q)
    \end{aligned}
    \end{equation*}
    where $Q = CA^{-1} B$. Here, the first equality follows from the Schur complement determinant rule \cite[Ch. 0.8.5]{horn2012matrix}, the second equality from the standard product determinant rule and the third equality from a repeated application of the Schur complement determinant rule so that $\mathrm{det}(I + RS) = \mathrm{det}(I + SR)$ with $R \in \mathbb{R}^{r \times s}$ and $S \in \mathbb{R}^{s \times r}$. But $\lVert CA^{-1} B \rVert$ is exactly the square root of the largest eigenvalue of $Q^T Q$, a matrix with nonnegative eigenvalues. It follows that the eigenvalues $\lambda$ of $\gamma^{-2} Q^T Q$ must satisfy $0 \leq \lambda < 1$ for all $\gamma > \lVert CA^{-1} B \rVert$. Thus, $\mathrm{det}(I - \gamma^{-2} Q^T Q ) \ne 0$ for such $\gamma$, exploiting the fact that the determinant is equal to the product of the eigenvalues. Further, since $A$ is invertible and hence $\mathrm{det}(A) = \mathrm{det}(A^T) \ne 0$, it follows that $\mathrm{det}(H) \ne 0$. Thus, no eigenvalue of $H$ can be zero for $\gamma > \lVert CA^{-1} B \rVert $, or else the eigenvalue product would be zero. If on the other hand $\gamma = \lVert CA^{-1} B \rVert$, then $\gamma^{-2} Q^T Q$ will have an eigenvalue $\lambda = 1$. As a result, similar reasoning gives $\mathrm{det}(H) = 0$, i.e., $H$ must have a zero eigenvalue.
\end{proof}

\begin{proof} \textbf{Theorem \ref{thrm:sylvester}}
    

    \noindent $\Leftarrow$: Recall first that for any $M \in \mathbb{R}^{n \times n}$ with Jordan decomposition $M = SJS^{-1}$, we have $e^{Mt} = Se^{Jt}S^{-1} = S\Bar{D}(t)\Bar{E}(t)S^{-1}$. Here, $\Bar{D}(t)$ is diagonal and consists of entries $e^{\lambda t}$ with $\lambda \in \sigma (M)$, and $\Bar{E}(t)$ has polynomial entries in $t$. As such, for any $Q \prec_K 0$,
    \begin{equation*}
    P = \int_0^\infty e^{D t}(-Q)e^{A t}\mathrm{dt}
    \end{equation*}
    must converge. This follows due to the assumption $\mu (A) + \mu (D) < 0$, as each entry in the integrand is a sum of terms with factors $e^{({\lambda_D}_i + {\lambda_A}_j)t}$. In particular, $e^{D t}Qe^{A t} \rightarrow 0$ as $t \rightarrow \infty$ so that
    \begin{equation*}
    \begin{aligned}
    Q &= \big [ e^{Dt}(-Q)e^{At} \big ] _0^\infty = \int_0^\infty \frac{d}{dt} \big( e^{Dt}(-Q)e^{At} \big ) \mathrm{dt} \\ &= \int_0^\infty \big ( De^{Dt}(-Q)e^{At} + e^{Dt}(-Q)e^{At}A \big ) \mathrm{dt} = DP + PA.
    \end{aligned}
    \end{equation*}
    Finally, Lemma \ref{lem:cross_eAt} implies that $P \succeq_K 0$, as $-Q \succ_K 0$ and the set of $K$-nonnegative matrices is closed as it is a proper cone, see Section \ref{sec:prel}. We may now lift $P$ into the interior by adding a sufficiently small perturbation.
    
    \noindent $\Rightarrow$: Because $A$ is cross-positive by assumption, by Lemma \ref{lem:cross_krein} there is a $v \succeq_K 0$ with $v \ne 0$ such that $Av = \mu (A) v$. Thus, \begin{equation*} 0 \succ_K Qv = (DP + PA)v = DPv + \mu (A) Pv = (D + \mu (A) I) Pv. \end{equation*} Since $Pv \succ_K 0$ and $D$ is cross-positive, Lemma \ref{lem:cross_stable} shows that $D + \mu (A) I$ is stable and the conclusion follows.
\end{proof}

\begin{proof} \textbf{Corollary \ref{cor:lyapunov}}

    \noindent This follows by invoking Theorem \ref{thrm:sylvester} after noting that $A$ is Hurwitz if and only if $\mu (A) = \mu (A^T ) < 0$, and that $A^T$ is also cross-positive on $K$ if $A$ is, provided that $K$ is self-dual. The latter follows by noting that $K_{**} = K$, see e.g., \cite[p. 53]{boyd2004convex}, so that $x \in K_{**} = K$, $y \in K_*$ and $x^T y = 0$ imply $x^T A^T y = (x^T A^T y)^T = y^T A x \geq 0$. 

\end{proof}

\begin{proof} \textbf{Theorem \ref{thrm:kyp_ext}}

    \noindent We prove the following chain of implications: $(i) \Rightarrow (iv) \Rightarrow (ii) \Rightarrow (iii) \Rightarrow (i)$. We subsequently show that $(ii) \Leftrightarrow (v)$.
    
    \underline{$(i) \Rightarrow (iv)$}: This follows immediately from Lemma \ref{lem:kyp}.

    \underline{$(iv) \Rightarrow (ii)$}: We have
    \begin{equation*}
        \begin{aligned}
           &\begin{pmatrix}
                I && 0 \\ 0 && P
            \end{pmatrix}
           \begin{pmatrix}
                A && \frac{1}{\gamma ^2}BB^T \\ C^T C && A^T
            \end{pmatrix} ^T
            \begin{pmatrix}
                P^T && 0 \\ 0 && I 
            \end{pmatrix}
            + \\ &
            \begin{pmatrix}
                P && 0 \\ 0 && I 
            \end{pmatrix}
            \begin{pmatrix}
                A && \frac{1}{\gamma ^2} BB^T \\ C^T C && A^T
            \end{pmatrix}
            \begin{pmatrix}
                I && 0 \\ 0 && P^T 
            \end{pmatrix}
            = \\ &
            \begin{pmatrix}
                A^T P^T + PA && \frac{1}{\gamma ^2} PBB^T P^T + C^T C \\ \frac{1}{\gamma ^2} PBB^T P^T + C^T C && A^T P^T + PA
            \end{pmatrix}
            = \\ &
            \begin{pmatrix}
                R - F && F \\ F && R - F
            \end{pmatrix}
            =
            \begin{pmatrix}
                R && 0 \\ 0 && R
            \end{pmatrix}
            +
            \begin{pmatrix}
                -F && F \\ F && -F
            \end{pmatrix}
            \prec 0
        \end{aligned}
    \end{equation*}
    where $R = \frac{1}{\gamma ^2} PBB^T P^T + A^T P^T + PA + C^T C$ and $F = \frac{1}{\gamma ^2} PBB^T P^T + C^T C$. In order to see why the above expression is negative definite, note that if $x = (y ^T , z ^T )^T \in \mathbb{R}^{2n}$, we have $$x^T \begin{pmatrix}
        -F && F \\ F && -F
    \end{pmatrix} x
    =
    -(y - z)^T F(y - z) \leq 0$$ since clearly $F \succeq 0$. Thus, negative definiteness follows as $R \prec 0$ by assumption. A congruence transformation now gives
    $$L^T \begin{pmatrix} P^T && 0 \\ 0 && P^{-T} \end{pmatrix} + \begin{pmatrix} P && 0 \\ 0 && P^{-1} \end{pmatrix} L \prec 0$$ and since $P + P^T \succ 0$ by assumption, another congruence transformation gives $P^{-T}(P^T + P)P^{-1} = P^{-1} + P^{-T} \succ 0$ so that for any real eigenvalue $\lambda$ of $L$, $\lambda < 0$ by Lemma \ref{lem:lyap_real}.

    In order to show that this implies stability, invoke the assumption of monotonicity through Remark \ref{rem:cone_def} to see that $A$ and $A^T$ are both cross-positive on $K_x$, see the proof of Corollary \ref{cor:lyapunov}. Further, $B K_u \subseteq K_x$ and $CK_x \subseteq K_z$ clearly imply $B ^T K_{x_*} \subseteq K_{u_*}$ and $C^T K_{z_*} \subseteq K_{x_*}$, respectively. It follows, since $K_u$, $K_x$ and $K_z$ are self-dual by assumption, that $B ^T K_{x} \subseteq K_{u}$ and $C^T K_{z} \subseteq K_{x}$. Thus, $B B^T \succeq_{K_x} 0$ and $C^T C \succeq_{K_x} 0$. Consequently, Lemma \ref{lem:kxk} gives that $L$ is cross-positive on $K \times K$, and it follows from Lemma \ref{lem:cross_krein} that $L$ has a real eigenvalue which upper bounds the real part of any other eigenvalue. But since all real eigenvalues of $L$ must be negative according to the above, $L$ must be Hurwitz.

    \underline{$(ii) \Rightarrow (iii)$}: This follows immediately from Lemma \ref{lem:emil}.

    \underline{$(iii) \Rightarrow (i)$}: Apply the following well-known similarity transformation to $H$:
    \begin{equation*}
        \begin{aligned}
           &\begin{pmatrix}
                I && 0 \\ -P && I 
            \end{pmatrix}
           \begin{pmatrix} A && \frac{1}{\gamma ^2}BB^T \\ -C^T C && -A^T \end{pmatrix}
            \begin{pmatrix}
                I && 0 \\ P && I 
            \end{pmatrix}
            = \\ &
            \begin{pmatrix}
                A + \frac{1}{\gamma ^2}BB^T P && \frac{1}{\gamma ^2}BB^T \\ 0 && -(A + \frac{1}{\gamma ^2}BB^T P ^T )^T
            \end{pmatrix}
        \end{aligned}
    \end{equation*}
    Here, the fact that $P$ solves (\ref{eq:ric}) was exploited to obtain the zero block in the last expression. Now, this final matrix is block triangular, and so its eigenvalues coincide with those of the matrices on the diagonal, i.e., $\sigma (A + \frac{1}{\gamma ^2}BB^T P ) \subseteq \sigma (H)$. Since $A + \frac{1}{\gamma ^2}BB^T P $ is Hurwitz by assumption and contains $n$ eigenvalues, the remaining $n$ eigenvalues must have positive real part due to the symmetric eigenvalue distribution of Hamiltonians about the imaginary axis \cite[p. 233]{zhou1998essentials}. It follows that $H$ can have no eigenvalues on the imaginary axis, and condition (i) thus follows from Lemma \ref{lem:kyp}.

    \underline{$(ii) \Leftrightarrow (v)$}: This follows directly from Lemma \ref{lem:cross_stable}.
\end{proof}

\begin{proof} \textbf{Theorem \ref{thrm:gain}}
    
    \noindent Suppose on the contrary that $\lVert G \rVert _\infty > \lVert G(0) \rVert$, i.e., there exists a $\gamma_* > 0$ such that $\lVert G(0) \rVert < \gamma_* < \lVert G \rVert _\infty$. By Theorem \ref{thrm:kyp_ext}, this means that $L( \gamma_* )$ cannot be Hurwitz, where $$L(\gamma ) = \begin{pmatrix}
            A & \frac{1}{\gamma^2}BB^T \\
            C^T C & A^T
    \end{pmatrix}.$$ At the same time, by the continuity of eigenvalues, $L$ must be Hurwitz for some sufficiently large $\gamma$, say $\gamma_+$, as $A$ is Hurwitz by assumption. But since $L$ is cross-positive on $K \times K$ (see the proof of Theorem \ref{thrm:kyp_ext}), Lemma \ref{lem:cross_krein} implies that $L(\gamma )$ has a real eigenvalue $\lambda (\gamma )$ with maximal real part over its spectrum for all $\gamma_* \leq \gamma \leq \gamma_+$. Now, since $\lambda (\gamma_* ) \geq 0$ and $\lambda (\gamma_+ ) < 0$, again by the continuity of eigenvalues there must exist some $\gamma_0 \geq \gamma_*$ such that $\lambda (\gamma_0 ) = 0$, i.e., $$H(\gamma_0 ) = \begin{pmatrix}
            A & \frac{1}{\gamma_0^2}BB^T \\
            -C^T C & -A^T
    \end{pmatrix} = \begin{pmatrix}
            I & 0 \\
            0 & -I
    \end{pmatrix} L(\gamma_0 )$$ has zero eigenvalue. But this is a contradiction by Lemma \ref{lem:gain} as $\lVert CA^{-1}B \rVert = \lVert G(0) \rVert < \gamma_* \leq \gamma_0$. Thus, $\lVert G \rVert _\infty \leq \lVert G(0) \rVert$, and so $$\lVert G(0) \rVert \leq \sup_{\omega} \lVert G(i \omega ) \rVert = \lVert G \rVert _\infty \leq \lVert G(0) \rVert$$ and the conclusion follows.
\end{proof}

\section{Conclusions} \label{sec:conclusions}
In this paper, we have explored cone-preserving (possibly nonsymmetric) solutions to Lyapunov and Riccati equations and inequalities for the purpose of stability and performance verification. Overall, these results apply to LTI systems with zero direct term that are monotone w.r.t. self-dual cones, and most of the results appear to be novel also in the special case of positive systems. Given the latter's success, the main purpose of the paper has been to complement it with a new angle as well as to pin down the structure it hinges on: the self-duality of the nonnegative orthant. By contrast, previous results indicate that the celebrated diagonal solution property and its many consequences are generated by the symmetric cones, a subset of the self-dual ones. One concrete benefit of this distinction is that we may now attribute properties such as the $H_\infty$ norm being determined by the static gain to the former structure rather than the latter, instead of collapsing the two.

Although the main contribution of the present paper is arguably one of understanding, it also hints at potential usage in areas such as controller synthesis. In particular, synthesis based on diagonal solutions as in \cite{tanaka2011bounded} gives $H_\infty$ optimality only for the restricted set of stabilizing closed-loop positive controllers. In a worst-case scenario, this optimal value may be very far from the optimal value over the entire set of stabilizing controllers. By contrast, in Subsection \ref{subsec:example_irrigation}, we see how closed-loop positive controllers are featured quite naturally as optimal also within this wider framework.

\section{Acknowledgements}
The author is grateful to Prof. Anders Rantzer and Dr. Dongjun Wu for proofreading the manuscript and providing helpful comments.

\printbibliography

\end{document}